\theoremstyle{plain}
\newtheorem{thm}{Theorem}[section]
\newtheorem{lm}[thm]{Lemma}
\newtheorem{prop}[thm]{Proposition}
\newtheorem{cor}[thm]{Corollary}
\newtheorem{que}[thm]{Question}
\theoremstyle{definition}
\newtheorem{ex}[thm]{Example}
\newtheorem{re}[thm]{Remark}
\newcommand{\CC}{{\mathbb C}}
\newcommand{\NN}{{\mathbb N}}
\newcommand{\QQ}{{\mathbb Q}}
\newcommand{\RR}{{\mathbb R}}
\newcommand{\Rb}{\RR_\infty}
\newcommand{\ZZ}{{\mathbb Z}}
\newcommand{\val}{v}
\newcommand{\lieg}[1]{\mathrm{#1}}
\newcommand{\oD}{\overline{D}}
\newcommand{\Sym}{\operatorname{Sym}}
\newcommand{\tp}{\top}
\newcommand{\tim}{$\times$}
\newcommand{\maysplit}{\discretionary{}{}{}}
\newcommand{\Trop}{\mathrm{Trop}}
\newcommand{\tdet}{\mathrm{tdet}}
\begin{document}

\title{Lossy gossip and composition of metrics}

\begin{abstract}
We study the monoid generated by $n \times n$ distance matrices under
tropical (or min-plus) multiplication. Using the tropical geometry of the
orthogonal group, we prove that this monoid is a finite polyhedral
fan of dimension $\binom{n}{2}$, and we compute the structure of
this fan for $n$ up to $5$. The monoid captures gossip among $n$
gossipers over lossy phone lines, and contains the gossip monoid over
ordinary phone lines as a submonoid. We prove several new results about
this submonoid, as well. In particular, we establish a sharp bound 
on chains of calls in each of which someone learns
something new.
\end{abstract}

\author{Andries E.~Brouwer, Jan Draisma, and Bart J.~Frenk}
\address{
Department of Mathematics and Computer Science\\
Technische Universiteit Eindhoven\\
P.O. Box 513, 5600 MB Eindhoven, The Netherlands}
\thanks{JD is supported by a Vidi grant from the Netherlands Organisation
for Scientific Research (NWO), and BJF by an NWO free competition
grant.}
\email{aeb@cwi.nl, j.draisma@tue.nl, b.j.frenk@tue.nl}
\maketitle

\section{Introduction and results}

\begin{figure}
\includegraphics[width=\textwidth]{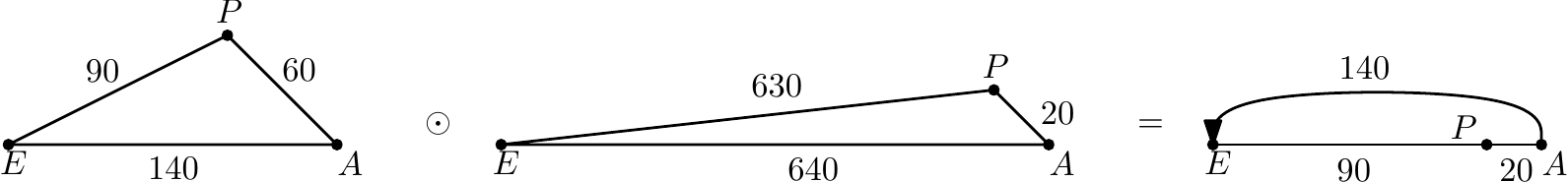}
\caption{Composing the car metric with the bike metric.}
\label{fig:EPA}
\end{figure} 

Imagine travelling between three locations such as Eindhoven ($E$, a
medium-sized town in the Netherlands), a parking lot $P$ on the border
of the Dutch capital Amsterdam, and the city center $A$ of Amsterdam.
In Figure~\ref{fig:EPA} the travel times by car between these locations
are depicted by the leftmost triangle, while the travel times by bike
are depicted by the second triangle. The large distances
between $E$ and either $P$ or $A$ are covered much faster by car than
by bike. On the other hand, because of crowded streets, the short
distance between $P$ and $A$ is covered considerably faster by bike than
by car.  As a consequence, an attractive alternative for travelling
from $E$ to $A$ by car is to travel by car from $E$ to $P$ and
continue by bike to $A$.
In other words, to get from $E$ to $A$ we first do a step in
the car metric and then a step in the bike metric, where we optimise
the sum of the two travel times. Computing this {\em car-bike metric}
for the remaining ordered pairs leads to the picture on the
right in Figure~\ref{fig:EPA}. The corresponding matrix
computation is 
\[ 
\begin{bmatrix}
0 & 90 & 140\\
90 & 0 & 60 \\
140 & 60 & 0
\end{bmatrix}
\odot 
\begin{bmatrix}
0 & 630 & 640\\
630 & 0 & 20\\
640 & 20 & 0
\end{bmatrix}
=
\begin{bmatrix}
0 & 90 & 110\\
90 & 0 & 20\\
140 & 20 & 0
\end{bmatrix},
\] 
where $\odot$ is {\em tropical} or {\em min-plus} matrix multiplication,
obtained from usual matrix multiplication by changing plus into minimum
and times into plus. Note that the resulting matrix is not symmetric (the
transpose corresponds to the ``first bike, then car'' metric), and that
it does not satisfy the triangle inequality either. The bike metric and
the car metric were both picked from the $3$-dimensional cone of symmetric
matrices satisfying all triangle inequalities. Hence one might think that
such tropical products sweep out a $3+3=6$-dimensional set. However, if
we perturb the travel times in the two metric matrices slightly, then
their min-plus product moves only in a {\em three}-dimensional space,
where the entry at position $(1,3)$ remains the sum of the entries
in positions $(1,2)$ and $(2,3)$, while the entry in position $(3,1)$
moves freely.  This preservation of dimension when tropically multiplying
cones of distance matrices is one of the key results of this paper.

While keeping this min-plus product in the back of our minds, we next
contemplate the following different setting. Three gossipers, Eve,
Patricia, and Adam, each have an individual piece of gossip, which
they can share through one-to-one phone calls in which both callers
update each other on all the gossip they know.  Record the knowledge of
$E,P,A$ in a three-by-three uncertainty matrix with entries $0$
(for ``$i$'s gossip is known by $j$'') and $\infty$ (for the other entries).
Then initially that matrix is the {\em tropical identity matrix},
with zeroes along the diagonal and $\infty$ outside the diagonal.
A phone call beteen $E$ and $P$, for example,
corresponds to tropically right-multiplying that tropical
identity matrix matrix with
\[ \begin{bmatrix} 0 & 0 & \infty \\ 0 & 0 & \infty \\
\infty & \infty & 0 \end{bmatrix}, \]
resulting in this very same matrix. A second phone call
between $P$ and $A$ leads to 
\[ \begin{bmatrix} 0 & 0 & \infty \\ 0 & 0 & \infty \\
\infty & \infty & 0 \end{bmatrix} \odot 
\begin{bmatrix} 0 & \infty & \infty \\ \infty & 0 & 0 \\
\infty & 0 & 0 \end{bmatrix} = 
\begin{bmatrix} 0 & 0 & 0  \\ 0 & 0 & 0 \\ \infty & 0 & 0
\end{bmatrix}. \]
Note the resemblance of this computation with the car-bike metric
computation above.  This resemblance can be made more explicit by passing
from gossip to {\em lossy gossip}, where each phone call between gossipers
$k$ and $l$ comes with a parameter $q \in [0,1]$ to be interpreted as
the fraction of information that gets broadcast correctly through the
phone line, and where each gossiper $j$ knows a fraction $p_{ij} \in
[0,1]$ of $i$'s gossip. Assume the (admittedly simplistic) procedure
where $k$ updates his knowledge of gossip $i$ to $q \cdot p_{il}$ if this
is larger than $p_{ik}$ and retains his knowledge $p_{ik}$ of gossip $i$
otherwise, and similarly for gossiper $l$. In this manner, the fractions
$p_{ij}$ are updated through a series of lossy phone-calls.  Passing from
$p_{ij}$ to the {\em uncertainty} $u_{ij}:=-\log p_{ij} \in [0,\infty]$
of gossiper $j$ about gossip $i$ and from $q$ to the {\em loss} $a:=-\log
q \in [0,\infty]$ of the phone line in the call between $k$ and $l$, the
update rule changes $u_{ik}$ into the minimum of $u_{ik}$ and $u_{il}+a$,
and similarly for $u_{il}$. This is just tropical right-multiplication
with the matrix $C_{kl}(a)$ having $0$'s on the diagonal, $\infty$'s
everywhere else, except an $a$ on positions $(k,l)$ and $(l,k)$. So {\em
lossy gossip is tropical matrix multiplication}. Note that lossy gossip
is different from {\em gossip over faulty telephone lines} discussed
in \cite{Berman86,Haddad87}, and also from gossip algorithms via
multiplication of doubly stochastic matrices as in \cite{Boyd06} (though
the elementary matrices $W_{kl}$ there are reminiscent of our matrices
$C_{kl}$).

This paper concerns the entirety of such uncertainty matrices, or
compositions of finite metrics. Our main result uses the following
notation: fixing a number $n$ (of gossipers or vertices), let $D=D_n$
be the set of all {\em metric} $n \times n$ matrices, i.e., matrices
with entries $a_{ij} \in \RR_{\geq 0}$ satisfying $a_{ii}=0$ and
$a_{ij}=a_{ji}$ and $a_{ij}+a_{jk} \geq a_{ik}$. For standard notions
in polyhedral geometry, we refer to \cite{Ziegler95}.

Throughout the paper, we give $[0,\infty]$ the topology of the one-point
compactification of $[0,\infty)$, i.e., the topology of a compact,
closed interval.

\begin{thm} \label{thm:main}
The set $\{A_1 \odot \cdots \odot A_k \mid k \in \NN,\ A_1,\ldots,A_k
\in D_n\}$ is the support of a (finite) polyhedral fan of dimension
$\binom{n}{2}$, whose topological closure in $[0,\infty]^{n \times
n}$ (with product topology) is the monoid generated by the matrices
$C_{kl}(a)$ with $k,l \in [n]:=\{1,\ldots,n\}$ and $a \in [0,\infty]$.
\end{thm}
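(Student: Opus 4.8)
Write $M$ for the set in question. I would organise the proof around the generators $C_{kl}(a)$. The starting point is that every metric is idempotent and factors through single edges: for $A\in D_n$ the triangle inequality gives $(A\odot A)_{ij}=\min_k(A_{ik}+A_{kj})=A_{ij}$ (minimum at $k=i$), and $A=\bigodot_{\{k,l\}}C_{kl}(A_{kl})$, the product taken over all edges in any order. Indeed, a term in the $(i,j)$ entry of this product is the length of a walk from $i$ to $j$ in the weighted complete graph on $[n]$ that uses each edge at most once; such a length is $\ge A_{ij}$ by iterated triangle inequality and equals $A_{ij}$ when one simply traverses the edge $\{i,j\}$, so the product is $A$. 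Hence $M$ is contained in the monoid generated by the $C_{kl}(a)$ with $a\in[0,\infty)$, and in particular $M\subseteq\RR_{\ge0}^{n\times n}$ has only finite entries.

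For the closure statement I would argue both inclusions. Each $C_{kl}(a)$, including $a=\infty$, is a limit of genuine metrics: replace the off-diagonal $\infty$'s by a common value $N\ge a$ (this is a metric) and let $N\to\infty$; hence $C_{kl}(a)\in\overline{D_n}\subseteq\overline M$. Since $\odot$ is continuous on the compact space $[0,\infty]^{n\times n}$, the closure $\overline M$ is again a submonoid, so it contains the whole monoid generated by the $C_{kl}(a)$ with $a\in[0,\infty]$. The reverse inclusion is immediate from the factorisation above, so $\overline M$ equals that monoid, which is the second assertion.

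It remains to prove that $M$ is the support of a finite polyhedral fan of dimension $\binom{n}{2}$. That it is a fan (cones rather than general polyhedra) is forced by scale-invariance: $\lambda(A_1\odot\cdots\odot A_k)=(\lambda A_1)\odot\cdots\odot(\lambda A_k)\in M$ for every $\lambda>0$, so each polyhedral piece is a cone. The dimension lower bound is free, since $D_n\subseteq M$ is already $\binom{n}{2}$-dimensional. For the upper bound and finiteness I would pass to the tropical orthogonal group: over a nontrivially valued field $K$ the matrix $C_{kl}(a)$ is exactly the coordinatewise valuation $\val(g_{kl})$ of a Givens rotation in the $(k,l)$-plane whose angle has $\sin$ of valuation $a$ and $\cos$ of valuation $0$. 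Since $\mathrm O_n$ has constant (integer) coefficients, the structure theorem for tropicalisations makes $\Trop(\mathrm O_n)$ a finite rational polyhedral fan of pure dimension $\dim\mathrm O_n=\binom{n}{2}$; the goal is then to show $M\subseteq\Trop(\mathrm O_n)$, which pins the dimension to exactly $\binom{n}{2}$ and exhibits $M$ as a finite union of cones (after intersecting with the conditions $b_{ii}=0$, $b_{ij}\ge0$, which keeps it a fan).

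The \emph{main obstacle} is precisely this inclusion $M\subseteq\Trop(\mathrm O_n)$, equivalently the statement that $\Trop(\mathrm O_n)$ is closed under $\odot$. Valuation of a matrix product is only submultiplicative, $\val(gh)\ge\val(g)\odot\val(h)$ entrywise, so realising a given min-plus product $C_{k_1l_1}(a_1)\odot\cdots\odot C_{k_ml_m}(a_m)$ as the valuation of an honest element of $\mathrm O_n(K)$ requires choosing the Givens rotations so that no leading-term cancellation occurs in any entry of their product. This is delicate exactly because the orthogonality relations $gg^\tp=I$ force cancellations, so one cannot simply appeal to genericity; controlling which tropical cells actually occur is where the detailed tropical geometry of $\mathrm O_n$ (a valuated Givens/$QR$ factorisation together with an analysis of initial forms on the group) must do the real work. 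Once this inclusion is in place, handling the boundary value $a=\infty$ and checking that the sign conditions cut out a full-dimensional subfan are routine.
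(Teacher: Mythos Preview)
Your overall architecture matches the paper's: factor metrics into edge matrices $C_{kl}(a)$, take closures to get the monoid statement, and bound the dimension via the inclusion $G_n\subseteq\Trop(\lieg{O}_n)$. But two steps do not go through as you have them.

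First, the finiteness of the fan does not follow from $M\subseteq\Trop(\lieg{O}_n)$. A subset of a polyhedral fan need not be polyhedral, and you have no argument that $M$ equals the intersection of $\Trop(\lieg{O}_n)$ with the positivity constraints; indeed the paper leaves open whether even $\Sym(n)\cdot G_n=\Trop(\lieg{O}_n)\cap[0,\infty]^{n\times n}$. The paper obtains finiteness by a separate, elementary argument: one first bounds the number of factors in an irredundant product of lossy phone call matrices by $n^2(n-1)/2$ (shortest-path reasoning on the entries), so $M$ is a \emph{finite} union of images of orthants $\RR_{\ge0}^k$ under piecewise linear maps, hence a fan. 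You are missing this length bound.

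Second, you misdiagnose the obstacle in the inclusion $M\subseteq\Trop(\lieg{O}_n)$. This is \emph{not} equivalent to $\Trop(\lieg{O}_n)$ being closed under $\odot$; the latter is strictly stronger and is left as an open question in the paper. More importantly, genericity \emph{does} work here, contrary to your claim. The Givens rotations $g_{ij}(c_r t^{a_r})$ lie in $\lieg{O}_n(L)$ for all choices of the scalars $c_r$, so orthogonality is never at stake; the only thing to arrange is that in the ordinary matrix product $g_1\cdots g_k$ no leading $t$-coefficient vanishes. Each such coefficient is a nonzero polynomial in the $c_r$, so outside a proper hypersurface in $\CC^k$ one has $\val(g_1\cdots g_k)=\val(g_1)\odot\cdots\odot\val(g_k)=C_{i_1j_1}(a_1)\odot\cdots\odot C_{i_kj_k}(a_k)$, placing this tropical product in $\Trop(\lieg{O}_n)$. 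The cancellations forced by $gg^\tp=I$ occur in the product of a matrix with its transpose, not in a generic product of independent Givens rotations; no ``valuated $QR$'' or initial-form analysis is needed.
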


We will denote that monoid by $G_n$, and call it the {\em lossy gossip
monoid} with $n$ gossipers.
The most surprising part of this theorem is that the dimension
of $G_n$ is not larger than $\binom{n}{2}$. We will establish this in
Section~\ref{sec:Proof} by proving that $G_n$ is contained the
tropicalisation of the orthogonal group $\lieg{O}_n$.

\begin{thm} \label{thm:n234}
For $n \leq 5$ the fan in the previous theorem is pure and
connected in codimension $1$. Moreover, for $n \leq 4$, there is a unique
coarsest such fan. This coarsest fan has $D_2,D_3,D_4$ among the
$1,7,289$ full-dimensional cones; and in total it has $1,2,16$ orbits
of full-dimensional cones under the groups $\Sym(2),\Sym(3),\Sym(4)$,
respectively.
\end{thm}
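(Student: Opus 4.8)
The plan is to reduce the entire statement to an explicit, computer-assisted enumeration of the maximal cones of the fan $G_n$, relying on Theorem~\ref{thm:main} to guarantee that this fan is finite and of dimension $\binom{n}{2}$. The first task is to fix a combinatorial model for the cones. Since the closure of $G_n$ is generated by the elementary matrices $C_{kl}(a)$, every element arises as a product $C_{k_1 l_1}(a_1) \odot \cdots \odot C_{k_m l_m}(a_m)$, and the $(i,j)$ entry of such a product is the minimum, over all time-respecting gossip paths carrying $i$'s gossip to $j$, of the sum of the losses $a_r$ along the path. Recording which path attains each minimum, together with the resulting equalities and strict inequalities among these min-plus expressions, cuts out a polyhedral cone in $[0,\infty]^{n\times n}$; the collection of these cones is the fan of Theorem~\ref{thm:main}. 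The practical content of this first step is to organize the combinatorial types so that they form a finite state space and to describe, at the purely combinatorial level, the operation ``append a call between $k$ and $l$ with a free loss $a$'' as a map on types.

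With such a model in hand, the second step is the enumeration and the counting. I would compute the fan by closing the set of types under right-multiplication by the generators: begin with the type of the tropical identity matrix, repeatedly apply each elementary operation to every known type, and retain the new cones, terminating when no new type appears; finiteness of this process is guaranteed by Theorem~\ref{thm:main}. The symmetric group $\Sym(n)$ acts by simultaneous row and column permutation, so it suffices to keep one representative per orbit and multiply orbit sizes back in. Carrying this out should yield $1,7,289$ maximal cones and $1,2,16$ orbits for $n=2,3,4$, with the metric cone $D_n$ appearing as one cone in each case. For $n=2,3$ the count is small enough to confirm by hand: $G_2$ is a single ray, and for $n=3$ one finds $D_3$ together with a single $\Sym(3)$-orbit of six non-symmetric cones, giving $1+6=7$ cones in $1+1=2$ orbits.

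The third step establishes purity and connectedness in codimension~$1$ for $n\le 5$. Purity amounts to checking that no inclusion-maximal cone has dimension below $\binom{n}{2}$; this is read off directly from the enumeration by computing the dimension of each cone, and I expect it to reflect the structural fact that every type is a face of one attained by a generic, fully separating history, which is full-dimensional. For connectedness in codimension~$1$ I would form the dual graph whose vertices are the maximal cones and whose edges join cones sharing a codimension-$1$ face, and verify by a graph search that it is connected. At $n=5$, where the fan lives in dimension $\binom{5}{2}=10$, this is the most demanding computation and is where the argument leans hardest on an efficient encoding of walls.

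Finally, for $n\le 4$ I would prove uniqueness of the coarsest fan by describing its walls intrinsically. Call a codimension-$1$ cone \emph{essential} if the union of the two maximal cones meeting along it is not convex, i.e.\ the support genuinely bends there; essentiality depends only on $|G_n|$ and not on the chosen fan structure. The coarsest fan is then obtained by gluing maximal cones across every non-essential wall, and it is the unique coarsest one provided each resulting region, a connected component of $|G_n|$ with the essential walls removed, is itself convex. The crux is therefore to verify this convexity for every region when $n\le 4$; I expect the likely reason the uniqueness claim is not asserted for $n=5$ is that some region there fails to be convex, leaving a genuine choice of subdivision. This convexity check, rather than the counting, is the main conceptual obstacle, since it is the one step not dispatched by routine enumeration.
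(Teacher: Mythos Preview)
Your plan follows the same computer-assisted enumeration spirit as the paper, and would succeed, but two methodological choices differ from the paper's and are worth contrasting. First, rather than a breadth-first search on combinatorial types from the identity, the paper (for $n=4$) simply enumerates all $6^6$ ordered products $C_{I_1}(a_1)\odot\cdots\odot C_{I_6}(a_6)$, selects the full-dimensional image cones, and then verifies closure under left-multiplication by the generators; this exploits the length bound $\binom{n}{2}$ directly rather than abstract finiteness. Second, and more substantively, the paper's route to the unique coarsest fan is not via essential walls and region-convexity as you propose: it groups the full-dimensional cones by their \emph{linear span} (289 spans for $n=4$), and observes that within each span one of the cones contains all the others, so that $G_n$ meets each such span in a single convex cone; these 289 cones are then the maximal cones of the coarsest fan. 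This criterion is both easier to check and pinpoints the failure mode at $n=5$: the paper exhibits two explicit open $10$-dimensional cones $P,Q\subseteq G_5$ with the same linear span whose union is not convex, which confirms your guess about why the uniqueness claim stops at $n=4$. Finally, you pass over the verification that the enumerated cones actually satisfy the fan axiom (pairwise intersections being common faces); the paper performs this check explicitly with \texttt{polymake}, obtaining the $f$-vector $(43,327,1042,1560,1092,289)$, and it is this check that underpins the coarsest-fan statement.
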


For some statistics for $n=5$ we refer to Section~\ref{sec:Five}.
We conjecture that the pureness and connectedness in codimension $1$
carry through to arbitrary $n$. 

About the length of products we can say the following.

\begin{thm}
For $n  \le 5$ every element of $G_n$ is the tropical product
of at most $\binom{n}{2}$ lossy phone call matrices $C_{kl}(a)$,
but not every element is the tropical product of fewer factors.
\end{thm}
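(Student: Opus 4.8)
The plan is to split the statement into a dimension-theoretic lower bound, valid for every $n$, and an upper bound, where the restriction to $n\le 5$ enters through the explicit fan structure of Theorems~\ref{thm:main} and~\ref{thm:n234}.

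For the lower bound I would argue by dimension. Fix a word $w=((k_1,l_1),\dots,(k_m,l_m))$ of unordered pairs and consider the map
\[
\varphi_w\colon [0,\infty]^m\to [0,\infty]^{n\times n},\qquad (a_1,\dots,a_m)\mapsto C_{k_1 l_1}(a_1)\odot\cdots\odot C_{k_m l_m}(a_m).
\]
Each entry of $\varphi_w(a_1,\dots,a_m)$ is a tropical (min-plus) polynomial in the $a_i$, so $\varphi_w$ is piecewise linear and its image is a finite union of polyhedral cones of dimension at most $m$. There are only finitely many words of each length. Hence, if every element of $G_n$ were a product of strictly fewer than $\binom{n}{2}$ matrices $C_{kl}(a)$, then $G_n$ would be the finite union $\bigcup_{|w|<\binom{n}{2}}\operatorname{im}\varphi_w$ of sets of dimension at most $\binom{n}{2}-1$, contradicting $\dim G_n=\binom{n}{2}$ from Theorem~\ref{thm:main}. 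This already shows that some element requires at least $\binom{n}{2}$ factors, i.e.\ the bound cannot be lowered.

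For the upper bound I would exploit that, for $n\le 5$, the fan of Theorem~\ref{thm:n234} is pure of dimension $\binom{n}{2}$, so its support --- and hence, after taking closures, $G_n$ --- is the union of its finitely many maximal cones. Since $C_{kl}(\infty)$ is the tropical identity matrix for every pair $(k,l)$, any product of fewer than $\binom{n}{2}$ factors can be padded to a product of exactly $N:=\binom{n}{2}$ factors. It therefore suffices to produce, for each maximal cone $\sigma$, a single word $w_\sigma$ of length $N$ with $\sigma\subseteq\operatorname{im}\varphi_{w_\sigma}$; the finite union of the resulting compact images $\varphi_{w_\sigma}([0,\infty]^N)$ is closed and contains every maximal cone, hence all of $G_n$.

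To build $w_\sigma$ I would use the interpretation of a product of call matrices as a matrix of temporal shortest-path losses: the $(i,j)$ entry of $\varphi_w(a_1,\dots,a_m)$ is the minimum, over all time-respecting paths from $i$ to $j$ using the edges in increasing order, of the sum of the losses along the path. Choosing an ordering of the $N$ edges adapted to $\sigma$ and restricting $\varphi_{w_\sigma}$ to the polyhedral region on which a fixed family of such paths is optimal gives a \emph{linear} map onto a subcone; one then checks that this subcone is all of $\sigma$ by verifying that the generators of $\sigma$ lie in its image, a finite linear-algebra computation. Because the maximal cones fall into only $1,2,16$ orbits under $\Sym(2),\Sym(3),\Sym(4)$ (and finitely many for $n=5$, see Section~\ref{sec:Five}), it is enough to carry this out for one representative per orbit and then transport the word by the relevant permutation. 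The main obstacle is exactly this per-cone verification: one must exhibit an edge ordering realizing each cone and confirm surjectivity onto it, which for $n\le 4$ can be done by hand on the sixteen representatives but for $n=5$ requires an explicit computer search matching maximal cones to length-$\binom{5}{2}$ words.
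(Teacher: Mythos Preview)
Your proposal is correct and mirrors the paper's approach. The lower bound via dimension is exactly the natural argument (the paper does not spell it out separately, but it is implicit in the combination of Theorem~\ref{thm:main} and the piecewise-linearity of the product maps). For the upper bound, the paper likewise reduces to a finite computer verification for $n=4,5$: every maximal cone of $G_n$ arises as (part of) the image of some length-$\binom{n}{2}$ word.

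One expository point is worth flagging. You invoke the fan structure of Theorem~\ref{thm:n234} and then search for a word covering each maximal cone. In the paper the logic runs the other way: for $n=4$ one first enumerates \emph{all} $6^6$ maps $(a_1,\dots,a_6)\mapsto C_{I_1}(a_1)\odot\cdots\odot C_{I_6}(a_6)$, extracts the $6$-dimensional image cones (289 spans, 16 $\Sym(4)$-orbits), and then proves that their union is all of $G_4$ by checking closure under left multiplication by each $C_{kl}(a)$. Thus the upper bound ``at most $\binom{n}{2}$ factors'' is not deduced from the fan---it is how the fan is produced in the first place. Your formulation is logically equivalent, but be aware that citing Theorem~\ref{thm:n234} for the list of cones is already citing the length-$\binom{n}{2}$ computation. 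Also, your phrase ``an ordering of the $N$ edges'' suggests each pair occurs exactly once; the paper's search allows repetitions ($6^6$ rather than $6!$), and you have not argued that repetition-free words suffice, so it is safer to search over all length-$N$ words as the paper does.
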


We conjecture that the restriction $n \le 5$ can be omitted.

Our next result concerns ``pessimal'' ordinary gossip (the least efficient
way to spread information, keeping the gossipers entertained for as long
as possible).

\begin{thm} \label{thm:pessimal}
Any sequence of phone calls among $n$ gossiping parties such that in each
phone call both participants exchange all they know, and at least one
of the parties learns something new, has length at most $\binom{n}{2}$,
and this bound is attained.
\end{thm}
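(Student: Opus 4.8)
The plan is to recast the process combinatorially and then exhibit a monotone ``rank'' that each informative call is forced to increase. Record the state after some calls by the knowledge sets $S_1,\dots,S_n\subseteq[n]$, where $S_j$ is the set of secrets currently known to gossiper $j$; thus $S_j=\{j\}$ initially, a call between $k$ and $l$ replaces both $S_k$ and $S_l$ by $S_k\cup S_l$, and the call is informative precisely when $S_k\ne S_l$. In the language of the uncertainty matrices this is just the statement that right-multiplying by $C_{kl}(0)$ turns at least one entry from $\infty$ into $0$. I would then introduce the potential
\[
\rho=\#\{\{i,j\}: i\ne j,\ i\in S_j\text{ and }j\in S_i\},
\]
the number of \emph{mutually informed} pairs (equivalently, off-diagonal positions $\{i,j\}$ with $u_{ij}=u_{ji}=0$). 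Since secrets are never forgotten, $\rho$ is non-decreasing along any sequence of calls, it starts at $0$, and it never exceeds $\binom{n}{2}$. Hence the upper bound follows at once from the claim that \emph{every informative call strictly increases $\rho$}.

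To prove this, note first that immediately after a call between $k$ and $l$ one has $k\in S_l$ and $l\in S_k$, so the pair $\{k,l\}$ is mutually informed afterwards; if it was not so before, $\rho$ has increased. The only remaining case is that $\{k,l\}$ was \emph{already} mutually informed while still $S_k\ne S_l$. Here I would invoke the following key lemma: \emph{if $k\in S_l$ and $S_l\not\subseteq S_k$, then some secret $m\in S_l\setminus S_k$ is owned by a gossiper who already knows $k$, i.e.\ $k\in S_m$.} Granting this, take $k$ to be a caller who learns something (so $S_l\not\subseteq S_k$) and obtain such an $m$: after the call $k$ knows $m$ as well, so $\{k,m\}$ becomes a new mutually informed pair, again raising $\rho$.

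The lemma itself is where the real work lies, and I expect it to be the main obstacle. I would prove it by induction on the sequence of calls (equivalently, on the moment at which the configuration $S_1,\dots,S_n$ is reached), the point being that right after any call its two participants have \emph{identical} knowledge sets. Tracing back to the most recent call in which gossiper $l$ learned a new secret, one finds a gossiper $p$ with $S_p\supseteq S_l$ whose own secret $p$ lies in $S_l$; if $p\notin S_k$ then $p$ is the desired witness (as $k\in S_l\subseteq S_p$), and otherwise one feeds the pair back into the inductive hypothesis. The delicate part is the bookkeeping forced by the fact that different secrets enter $S_l$ at different times, so that ``$l$ knows $k$'' and ``$l$ knows $m$'' need not have been acquired in the same call; carrying the induction through this temporal interleaving is the crux, and is exactly what rules out the (otherwise tempting but, one checks, unreachable) configurations in which an informative call would create no new mutually informed pair.

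Finally, attainment is easy and I would establish it by induction on $n$: let gossipers $1,\dots,n-1$ first carry out an optimal scheme of $\binom{n-1}{2}$ informative calls after which each of them knows all of the secrets $1,\dots,n-1$, and then have gossiper $n$ call $1,2,\dots,n-1$ in succession. Each of these $n-1$ extra calls teaches secret $n$ to a gossiper who did not yet know it and is therefore informative, giving $\binom{n-1}{2}+(n-1)=\binom{n}{2}$ informative calls in total, with everybody ending up knowing everything.
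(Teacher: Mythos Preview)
Your attainment argument is fine. The upper bound argument, however, does not go through: the ``key lemma'' is false, and consequently the potential $\rho$ need \emph{not} strictly increase at every informative call. Take $n=5$ and perform the calls
\[
\{1,4\},\ \{2,5\},\ \{4,5\},\ \{1,5\},\ \{2,3\},\ \{2,4\}
\]
in that order. One checks that the resulting knowledge sets are
\[
S_1=S_5=\{1,2,4,5\},\qquad S_2=S_4=\{1,2,3,4,5\},\qquad S_3=\{2,3,5\},
\]
with $\rho=7$. Now the call $\{1,2\}$ is informative (gossiper $1$ learns secret $3$), yet afterwards $\rho$ is still $7$: the only pair that could become newly mutual is $\{1,3\}$, but $1\notin S_3$. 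This same configuration refutes the lemma directly: with $k=1$ and $l=2$ one has $k\in S_l$ and $S_l\setminus S_k=\{3\}$, but $k\notin S_3$. Your proposed inductive proof sketch of the lemma is therefore not salvageable as stated; the ``delicate bookkeeping'' you anticipate is not merely delicate but impossible, since the statement you are trying to prove is false.

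It is worth noting that the paper's own proof abandons combinatorics entirely. Instead it associates to each call $I_a$ the embedding $\phi_a:\lieg{SO}_2(\CC)\hookrightarrow\lieg{SO}_n(\CC)$ into the $I_a\times I_a$ block, sets $X_k$ equal to the Zariski closure of the image of $(g_1,\ldots,g_k)\mapsto\phi_1(g_1)\cdots\phi_k(g_k)$, and observes that the $(i,j)$-entry vanishes identically on $X_k$ exactly when $j$ does not yet know secret $i$ after $k$ calls. An informative call thus forces $X_{k-1}\subsetneq X_k$; since both are irreducible, $\dim X_{k-1}<\dim X_k$, and the chain of dimensions is bounded by $\dim\lieg{SO}_n(\CC)=\binom{n}{2}$. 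The paper's acknowledgments even thank two people ``for pointing out problems with an earlier, purely combinatorial proof'' of this very theorem---so you are in good company, but a working combinatorial potential of this kind does not appear to be known.
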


This implies a bound on the length of {\em irredundant products} of
matrices $C_{kl}(0)$, i.e., tropical products where leaving out any
factor changes the value of the product.

\begin{cor} \label{cor:irredundant}
In the monoid generated by the matrices $C_{kl}(0),\ k,l \in [n]$ every
irredundant product of such matrices has at most $\binom{n}{2}$ factors.
\end{cor}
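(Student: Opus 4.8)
The plan is to read Corollary~\ref{cor:irredundant} directly off Theorem~\ref{thm:pessimal} by translating ``irredundant product'' into ``gossip sequence in which someone always learns something new.'' A product $C_{k_1 l_1}(0) \odot \cdots \odot C_{k_m l_m}(0)$ is exactly the uncertainty matrix produced by starting from the tropical identity $I$ (each party knows only its own gossip) and performing the phone calls $\{k_1,l_1\},\ldots,\{k_m,l_m\}$ in this order, as in the introductory example. Since all losses are $0$, every entry stays in $\{0,\infty\}$, and the matrix obtained after the first $i$ factors records precisely who knows whose gossip. I would first make the update rule explicit: right-multiplying by $C_{kl}(0)$ replaces each of the columns $k$ and $l$ by the entrywise minimum of the old columns $k$ and $l$, leaving all other columns fixed. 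In particular the set of $0$-entries (the ``knowledge pattern'') never shrinks as calls proceed.

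The crux is the equivalence between redundancy of a factor and the corresponding call conveying no new knowledge, and I would prove the one direction I need via its contrapositive: if in the $i$-th call neither party learns anything new, then that factor can be deleted without changing the product. Indeed, ``nobody learns anything'' forces columns $k_i$ and $l_i$ of the current matrix to coincide (each equals the entrywise minimum of the two), so right-multiplication by $C_{k_i l_i}(0)$ leaves the matrix unchanged: writing $P_{<i}$ for the tropical product of the first $i-1$ factors, we get $P_{<i} \odot C_{k_i l_i}(0) = P_{<i}$. Hence the full product equals the product with factor $i$ removed, and so it is redundant. (One may discard at the outset any factor with $k_i=l_i$, since $C_{kk}(0)=I$ is automatically redundant, so all retained factors are genuine phone calls.)

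Combining these observations finishes the argument: if the product $C_{k_1 l_1}(0)\odot\cdots\odot C_{k_m l_m}(0)$ is irredundant, then in every one of the $m$ associated calls at least one party must learn something new, for otherwise the previous step exhibits a removable factor. This is exactly the hypothesis of Theorem~\ref{thm:pessimal}, which then yields $m \le \binom{n}{2}$. I do not expect a serious obstacle here, as the corollary is essentially a restatement of the theorem once the dictionary is in place; the only point requiring care is the bookkeeping that ties the abstract notion of redundancy to the concrete statement that a knowledge-free call acts as the identity matrix, which is precisely the step that converts the ``pessimal gossip'' bound into the claimed bound on irredundant products.
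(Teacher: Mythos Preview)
Your proof is correct and follows essentially the same route as the paper's: the paper's one-sentence argument is that ``in any irredundant product of phone calls, every initial segment must be a sequence of phone calls in each of which at least one party learns something new,'' which is exactly the contrapositive you spell out---if no one learns in the $i$-th call then $P_{<i}\odot C_{k_il_i}(0)=P_{<i}$ and the factor is redundant---after which Theorem~\ref{thm:pessimal} applies. Your write-up is simply a more explicit version of the same reasoning.
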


\medskip
Our motivation for this paper is twofold. First, it establishes a
connection between gossip networks and composition of metrics that seems worth
pursuing further. Second, the lossy gossip monoid is
a beautiful example of a submonoid of $(\RR \cup \{\infty\})^{n \times
n}$; a general theory of such submonoids also seems very
worthwhile. Note
that sub{\em groups} of this semigroup (but with identity element an
arbitrary idempotent matrix) have been investigated in \cite{Izhakian12}.

The remainder of this paper is organised as
follows. Sections~\ref{sec:Prel} and~\ref{sec:Detours} contain
observations that pave the way for the analysis for $n=3,4$ in
Sections~\ref{sec:Three} and~\ref{sec:Four}. In Section~\ref{sec:Five} we
report on extensive computations for $n=5$. In Section~\ref{sec:Proof}
we discuss tropicalisations of the special linear groups and the
orthogonal groups, and use the latter to prove the first statement of
Theorem~\ref{thm:main}. Interestingly, no polyhedral-combinatorial proof
of Theorem~\ref{thm:main} is known. In Section~\ref{sec:Ordinarygossip}
we study the monoid generated by the ordinary gossip matrices
$C_{kl}(0),\ k,l \in [n]$: using the ordinary orthogonal group we
prove Theorem~\ref{thm:pessimal}, and for $n \leq 9$ we determine the
order of this monoid. We conclude with a number of open questions in
Section~\ref{sec:Open}.

\section*{Acknowledgments}
We thank Tyrrell McAllister for discussions on the tropical
orthogonal group many years ago; and Peter Fenner and Mark Kambites
for pointing out problems with an earlier, purely combinatorial proof
of Theorem~\ref{thm:pessimal}.

\section{Preliminaries} \label{sec:Prel}

Fixing a natural number $n$, we define $\oD_n$ to be the topological
closure of $D_n$ in $[0,\infty]^{n \times n}$, and we denote by $G_n$
the monoid generated by $\oD_n$ under min-plus matrix multiplication.
We call $G_n$ the {\em lossy gossip monoid} with $n$ gossipers. This
terminology is justified by the following lemma.

\begin{lm} \label{lm:Generation}
The lossy gossip monoid $G_n$ is generated by the {\em lossy phone call
matrices} $C_{kl}(a)\ (k,l \in [n], a \in [0,\infty])$ having
zeroes on the diagonal and $\infty$ everywhere else except for values
$a$ on positions $(k,l)$ and $(l,k)$.
\end{lm}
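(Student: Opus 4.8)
The plan is to prove the two inclusions between $G_n$ and the submonoid $\langle C_{kl}(a)\rangle$ generated by the phone-call matrices separately, and the bulk of the work goes into the reverse inclusion. For $\langle C_{kl}(a)\rangle \subseteq G_n$ it suffices to exhibit each generator as an element of $\oD_n$. First I would check that $C_{kl}(a)$ with $a\in[0,\infty)$ is a limit of honest metrics: set $a_{kl}=a_{lk}=a$ and fill every remaining off-diagonal entry with a common large value $M$. The triangle inequality $a_{ij}+a_{jk}\geq a_{ik}$ holds once $M\geq a$, since among any two of the involved edges at most one carries the small weight $a$ and hence their sum is at least $M$; letting $M\to\infty$ gives $C_{kl}(a)\in\oD_n$. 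The case $a=\infty$ yields the tropical identity (the monoid unit), which is likewise a limit of rescaled metrics.

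For the inclusion $\oD_n\subseteq\langle C_{kl}(a)\rangle$ I would prove the sharper statement that \emph{every} finite metric matrix $A\in D_n$ is the single $\odot$-product
\[
A \;=\; \bigodot_{\{i,j\},\ i<j} C_{ij}(a_{ij}),
\]
taken over all $\binom{n}{2}$ unordered pairs in any fixed order $e_1,\dots,e_m$. The key is the shortest-path reading of such a product: expanding the min-plus product, the $(s,t)$ entry is the minimum, over sequences $s=k_0,k_1,\dots,k_m=t$, of $\sum_r (C_{e_r})_{k_{r-1}k_r}$, and a finite summand forces each step either to stay put (cost $0$) or to traverse the edge $e_r$ at its prescribed time (cost $a_{e_r}$). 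Thus the entry equals the minimal cost of a walk from $s$ to $t$ whose traversed edges form a subsequence of $(e_1,\dots,e_m)$. The direct edge $\{s,t\}$ occurs somewhere in this list and is realised by a one-edge walk, so the entry is $\leq a_{st}$; conversely every admissible walk is a walk in the weighted complete graph with edge weights $a_{ij}$, whose length is $\geq a_{st}$ precisely because $A$ satisfies the triangle inequality. Hence the $(s,t)$ entry is exactly $a_{st}$, and the product equals $A$.

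It remains to pass from $D_n$ to its closure $\oD_n$. Min-plus matrix multiplication is continuous on the compact space $[0,\infty]^{n\times n}$, and so is the map $a\mapsto C_{ij}(a)$; since the product above has a \emph{fixed} number $\binom{n}{2}$ of factors, the iterated product depends continuously on the entries $a_{ij}$. Writing any $A\in\oD_n$ as a limit of metrics $A^{(p)}\in D_n$ and applying the displayed identity to each $A^{(p)}$, the limit identity $A=\bigodot_{i<j} C_{ij}(a_{ij})$ holds for $A$ as well, so $A\in\langle C_{kl}(a)\rangle$; this also sidesteps any need to characterise $\oD_n$ explicitly. Combining the two inclusions gives $G_n=\langle\,\oD_n\,\rangle=\langle C_{kl}(a)\rangle$. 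The main obstacle is the combinatorial heart of the middle paragraph, namely the nonobvious fact that a \emph{single} ordered pass through the edges already reconstructs $A$: this hinges entirely on the triangle inequality guaranteeing that direct edges are shortest paths, so that no repeated passes (and no edges out of order) are needed.
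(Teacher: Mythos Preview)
Your proof is correct and follows essentially the same approach as the paper's: both verify that the generators $C_{kl}(a)$ lie in $\oD_n$, and both establish the key identity $A=\bigodot_{i<j}C_{ij}(a_{ij})$ via the shortest-path interpretation of the min-plus product together with the triangle inequality. The only cosmetic difference is that the paper applies the path argument directly to any $A\in\oD_n$ (since the triangle inequalities, being closed conditions, persist in the closure), whereas you first treat $A\in D_n$ and then pass to the closure by continuity of the fixed-length product; both routes are equally valid.
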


\begin{proof}
Lossy phone call matrices lie in $\oD_n$, so the monoid that they generate
is contained in $G_n$. For the converse it suffices to show that every
element $A$ of $\oD_n$ is the product of lossy phone call matrices.
We claim that, in fact, $A=\prod_{k<l} C_{kl}(a_{kl})=:B$, where the
$a_{kl}$ are the entries of $A$ and the product is
taken in any order. Indeed, the $(i,j)$-entry of $B$ is the minimum of
expressions of the form $a_{i_0,i_1}+a_{i_1,i_2}+\ldots+a_{i_{s-1},i_s}$
where $s \leq \binom{n}{2}$, $i_0=i$, $i_s=j$, and where the
$C_{i_0,i_1},\ldots,C_{i_{s-1},i_s}$ (with $s\leq \binom{n}{2}$) appear in
that order (though typically interspersed with other
factors) in the product expression for $B$. By the triangle inequalities
among the entries of $A$, the minimum of these expressions equals
$a_{i,j}$.
\end{proof}

Although elements of $G_n$ need not be symmetric, they have
a symmetric core.

\begin{lm}
Each element $A$ of $G_n$ satisfies $a_{ij} = a_{ji}$ for at least $n-1$
pairs of distinct indices $i,j$. The graph with vertex set $[n]$ and
these pairs as edges is connected.
\end{lm}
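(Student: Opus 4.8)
The plan is to use Lemma~\ref{lm:Generation} to write $A = C_{e_1}(b_1)\odot\cdots\odot C_{e_m}(b_m)$ as a product of lossy phone call matrices, where $e_t=\{k_t,l_t\}$ and $b_t\in[0,\infty]$, and to read the entries of $A$ as shortest ``time-respecting'' walk lengths: $a_{ij}$ is the minimum, over sequences of calls $t_1<\cdots<t_s$ whose edges form a walk from $i$ to $j$, of $\sum_r b_{t_r}$ (staying put is free, since the diagonals are $0$). First I would forget the time stamps and record only the static weighted graph $\Gamma$ on vertex set $[n]$ in which distinct $i,j$ are joined by an edge of weight $w_{ij}:=\min\{b_t : e_t=\{i,j\}\}$ whenever some call takes place between them, and by no edge otherwise. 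Let $d_\Gamma$ denote the associated shortest-path metric, with $d_\Gamma(i,j)=\infty$ when $i,j$ lie in different components of $\Gamma$.

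The point of this reduction is the two-sided estimate $d_\Gamma(i,j)\le a_{ij}\le w_{ij}$, valid for every ordered pair. The upper bound holds because the single call on $\{i,j\}$ of least loss is, by itself, a time-respecting walk in either direction; the lower bound holds because any time-respecting walk is in particular a walk in $\Gamma$, and its total loss is at least the sum of the corresponding edge weights, hence at least $d_\Gamma(i,j)$. Crucially the lower bound $d_\Gamma$ is symmetric, so the whole obstruction to $a_{ij}=a_{ji}$ is squeezed into the gap between $d_\Gamma(i,j)$ and $w_{ij}$.

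The key step is then to choose, inside each connected component of $\Gamma$, a minimum spanning tree $T$. I claim every edge $\{i,j\}$ of $T$ is symmetric, with $a_{ij}=a_{ji}=w_{ij}$. Indeed, by the cycle (or ``exchange'') characterisation of minimum spanning trees together with the nonnegativity of the weights, a tree edge admits no strictly cheaper alternative path: if some $i$--$j$ path in $\Gamma$ had total weight $<w_{ij}$, then, the weights being nonnegative, each of its edges would have weight $<w_{ij}$, making $\{i,j\}$ the unique heaviest edge of the cycle it closes and hence excluded from every minimum spanning tree. Thus $d_\Gamma(i,j)=w_{ij}$ on tree edges, and the sandwich above collapses to $a_{ij}=a_{ji}=w_{ij}$. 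I expect this tightness argument to be the main obstacle, and the place where one must resist the tempting but false guesses that the first, or the last, call between two parties is automatically symmetric: time-ordering can route a cheap one-way detour through intermediate gossipers, and it is precisely the time-forgetting lower bound $d_\Gamma$ that rules this out for minimum-weight tree edges.

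Finally I would assemble the connected spanning graph. Within each component of $\Gamma$ the tree $T$ already supplies a symmetric spanning tree. For two indices $i,j$ lying in different components of $\Gamma$ there is no walk between them at all, so $a_{ij}=a_{ji}=\infty$ and the pair $\{i,j\}$ is again symmetric; these cross-component pairs link the per-component trees together. The union of all the tree edges and all cross-component pairs is therefore a connected spanning subgraph of $[n]$ consisting entirely of symmetric pairs, which shows both that at least $n-1$ pairs are symmetric and that the graph formed by all symmetric pairs is connected.
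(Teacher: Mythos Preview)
Your proof is correct, and it differs in flavor from the paper's. Both begin by writing $A$ as a product of lossy phone calls $C_{e_t}(b_t)$ and reading $a_{ij}$ as the minimal cost of a time-respecting walk from $i$ to $j$. From there the paper argues by cuts: for any bipartition $(K,L)$ of $[n]$ it picks the call of least loss $b_r$ among those crossing the bipartition, and observes that any walk between its two endpoints must cross the cut and therefore pay at least $b_r$; hence that crossing pair is symmetric with common value $b_r$. You instead pass to the static weighted graph $\Gamma$, establish the sandwich $d_\Gamma(i,j)\le a_{ij}\le w_{ij}$, and use the cycle property of minimum spanning trees to collapse the sandwich on every MST edge.

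The two arguments are dual in the usual MST sense: the paper's one-liner is exactly the cut property (the minimum-weight edge across any cut is symmetric and equals its weight in $\Gamma$), while yours rests on the cycle property. The paper's version is shorter and avoids introducing $\Gamma$ and $d_\Gamma$ explicitly; yours is more structural, explicitly exhibiting a specific spanning tree of symmetric entries (any MST of $\Gamma$, augmented by the $\infty$-pairs between components), which is slightly more than bare connectivity requires.
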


\begin{proof}
We need to prove that for any partition of $[n]$ into two nonempty
parts $K$ and $L$ there exist a $k \in K$ and an $l \in L$ such
that $a_{kl}=a_{lk}$. Write $A=C_{i_1,j_1}(b_1) \odot \cdots \odot
C_{i_s,j_s}(b_s)$ with $b_1,\ldots,b_s \in \RR_{\geq 0}$. If there
is no $r$ such that $i_r$ and $j_r$ lie in different sides of the
partition, then $a_{kl}=a_{lk}=\infty$ for all $k \in K$ and $l \in L$.
Otherwise, among all $r$ for which $i_r$ and $j_r$ lie in different
parts of the partition choose one for which $b_r$ is minimal. Then
$a_{i_r,j_r}=a_{j_r,i_r}=b_r$.
\end{proof}

\begin{lm}
Every connected graph on $[n]$ occurs as symmetric core of some element
of $G_n$. 
\end{lm}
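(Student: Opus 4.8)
The plan is to exploit the description of a product of call matrices as a \emph{shortest time-respecting walk} matrix, implicit in the proof of Lemma~\ref{lm:Generation}: if $A = C_{x_0x_1}(w_1)\odot\cdots\odot C_{x_{N-1}x_N}(w_N)$ with $\{x_{t-1},x_t\}$ the edge called at time $t$, then $a_{ij}$ is the minimum of $\sum_{t\in S} w_t$ over sets $S=\{t_1<\cdots<t_k\}$ whose calls chain into a walk from $i$ to $j$ (consecutive calls sharing a vertex). The key asymmetry is that reversing such a walk reverses the time order and is therefore \emph{not} admissible.

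Given a connected graph $H$ with edge set $E$, I would take a closed walk $W$ in $H$ that traverses every edge: replace each edge by a pair of opposite arcs and take an Eulerian circuit of the resulting balanced, connected digraph, giving a closed vertex sequence $x_0,x_1,\dots,x_N=x_0$ with $N=2|E|$ and $\{x_{t-1},x_t\}\in E$ for all $t$. Linearising $W$ and assigning \emph{generic} weights $w_1,\dots,w_N$ in the open interval $(1,2)$, I set $A:=C_{x_0x_1}(w_1)\odot\cdots\odot C_{x_{N-1}x_N}(w_N)\in G_n$. Only edges of $H$ occur as calls, and each edge occurs at least once. The claim is that the symmetric core of $A$ is exactly $E$.

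Edges are the easy case: since every weight lies in $(1,2)$, a single call is strictly cheaper than any walk of length $\ge 2$, so for $\{i,j\}\in E$ the values $a_{ij}$ and $a_{ji}$ both equal the least weight among the calls on $\{i,j\}$, realised in either direction by that same single undirected call; hence $a_{ij}=a_{ji}$. For a non-edge $\{u,v\}$ I would first rule out $a_{uv}=a_{vu}=\infty$. Writing $\alpha_u,\beta_u$ for the first and last positions at which $W$ visits $u$, following $W$ from $\alpha_u$ onward shows that $u$ reaches $v$ whenever $\alpha_u\le\beta_v$, and for $u\neq v$ at least one of $\alpha_u\le\beta_v$, $\alpha_v\le\beta_u$ always holds. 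Thus every pair is joined by a time-respecting walk in at least one direction, so at most one of $a_{uv},a_{vu}$ is infinite; if exactly one is infinite the two already differ.

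The main obstacle is the remaining case, where both $a_{uv}$ and $a_{vu}$ are finite: I must show they differ for generic $w$. Each is a minimum of the linear forms $w(S)=\sum_{t\in S}w_t$ over the supports $S$ of admissible walks, so the locus $\{a_{uv}=a_{vu}\}$ lies in the union of the hyperplanes $\{w(S)=w(S')\}$ where $S$ is the support of a $u\to v$ walk and $S'$ that of a $v\to u$ walk. The crux is to verify that always $S\neq S'$: a support $S=\{t_1<\cdots<t_k\}$ determines its chained walk and hence its two endpoints, and since a time-respecting walk cannot be run backwards, one set $S$ cannot serve both a $u\to v$ and a $v\to u$ walk when $u\neq v$ (for $k=1$ this would force $\{u,v\}\in E$). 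Hence every such hyperplane is proper, their union has measure zero, and a generic choice of weights in $(1,2)$ makes $a_{uv}\neq a_{vu}$ for all non-edges simultaneously, giving symmetric core exactly $E$.
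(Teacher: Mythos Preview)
Your argument is correct and takes a genuinely different route from the paper's. One step is phrased a bit loosely: you assert that ``a support $S$ determines its chained walk and hence its two endpoints'', but strictly a support only determines the walk once a starting vertex is fixed, and both endpoints of the first called edge $e_{t_1}$ are candidate starts. What you actually need (and what your parenthetical about $k=1$ already hints at in a special case) is that if $S$ chained into both a $u\to v$ walk and a $v\to u$ walk, then $u$ and $v$ would both lie on $e_{t_1}$, forcing $e_{t_1}=\{u,v\}\in E(H)$, contrary to $\{u,v\}$ being a non-edge. With this observation in place, every hyperplane $\{w(S)=w(S')\}$ is proper and the genericity argument goes through.

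The paper instead produces an explicit witness. It lists the edges as $I_1,\ldots,I_m$ and takes the product $\prod_i C_{I_i}(1+2^{-i})$ followed by further passes $\prod_i C_{I_i}(2^i)$, $\prod_i C_{I_i}(2^{m+i})$, $\ldots$ with exponentially growing weights, continued until every entry becomes finite. Edges get symmetric entries $1+2^{-i}$ by the same ``one call beats two'' trick you use. For a non-edge, the minimising value is an integer plus a sum of distinct powers of $2$, so its binary expansion recovers the exact sequence of contributing factors and hence the ordered pair $(k,l)$; thus $a_{kl}\neq a_{lk}$. Your construction is shorter (only $2|E|$ calls via an Euler tour of the doubled graph) and conceptually clean but existential; the paper's is longer and more delicate but yields a concrete element of $G_n$ with the prescribed symmetric core.
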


\begin{proof}
Number the edges of that subgraph $I_1,\ldots,I_m$
and consider the tropical product
\begin{align*}
A=&C_{I_1}(1+2^{-1}) \odot C_{I_2}(1+2^{-2}) \odot \cdots \odot
C_{I_m}(1+2^{-m})\\
\odot &C_{I_1}(2^1) \odot C_{I_2}(2^2) \odot \cdots \odot C_{I_m}(2^m) \\
\odot &C_{I_1}(2^{m+1}) \odot \cdots \odot
C_{I_m}(2^{2m})\\
\odot &C_{I_1}(2^{2m+1}) \odot \cdots,
\end{align*}
where the product stabilises once all edges have acquired a finite length.
For $\{k,l\}$ equal to some $I_i$ we have $a_{kl}=a_{lk}=1+2^{-i}$, where
we use that the sum of two of these $m$ numbers is larger than any third.
For any other $\{k,l\}$ the value $a_{kl}$ is a sum of some number $m$
of distinct negative powers of $2$, the integer $m$ itself, and some number of
distinct positive powers of $2$. This sum uniquely determines the sequence of
factors contributing to it, of which there are at least two. Hence the sum
determines the ordered pair $(k,l)$. In particular, we have $a_{lk}
\neq a_{kl}$.
\end{proof}

\medskip
Observe that $C_{kl}(a) \odot C_{kl}(b) = C_{kl} (a \oplus b)$, where
$\oplus$ denotes tropical addition defined by $a \oplus b = \min (a,b)$.
Thus Lemma \ref{lm:Generation} exhibits $G_n$ as a
monoid generated by certain {\em one-parameter submonoids}, reminiscent
of the generation of algebraic groups by one-parameter subgroups. This
resemblance will be exploited in Sections~\ref{sec:Proof}
and~\ref{sec:Ordinarygossip}.

We define the {\em length} of an element $X$ of $G_n$ as the minimal
number of factors in any expression of $X$ as a tropical product of lossy
phone call matrices $C_{kl}(a)$. A rather crude, but uniform upper bound
on the length of elements of $G_n$ is the maximal number of factors in
a tropical product of lossy phone call matrices in which no factor can
be left out without changing the result. We call such an 
expression {\em irredundant}, and we have the following bounds. 

\begin{lm} \label{lm:UpperBound}
The number of factors in any irredundant tropical product of lossy
phone call matrices in $G_n$ is at most $n^2(n-1)/2$. In particular,
the length of every element of $G_n$ is bounded by this number.
\end{lm}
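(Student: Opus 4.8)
The plan is to analyze an irredundant product $A=C_{e_1}(b_1)\odot\cdots\odot C_{e_s}(b_s)$ through the combinatorics of optimal paths, extending the computation in the proof of Lemma~\ref{lm:Generation}. First I would record the description of the entries: expanding the tropical product, $a_{ij}$ equals the minimum, over all increasing position sequences $r_1<\cdots<r_t$ whose edges $e_{r_1},\ldots,e_{r_t}$ form a walk from $i$ to $j$, of the sum $b_{r_1}+\cdots+b_{r_t}$; call such sequences \emph{admissible walks}. A factor $C_{kl}(\infty)$ equals the tropical identity and may be deleted, so I may assume all $b_r$ are finite, hence nonnegative. Because the weights are nonnegative, any admissible walk that revisits a vertex can be shortened by excising the loop between the two visits, the remaining positions still being increasing; thus every finite entry $a_{ij}$ is realised by an admissible \emph{simple} path, which uses at most $n-1$ of the factors.

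Next I would reduce irredundancy to an essentiality statement about individual entries. Reading the product from left to right, the partial products are determined by the factors used so far, so if deleting the factor in position $r$ left the full product unchanged it would be redundant; hence in an irredundant product deleting any single factor strictly increases $a_{ij}$ for at least one pair $(i,j)$. Fix, for each factor, one such \emph{witness} pair. The key local bound is then: for a fixed pair $(i,j)$, every factor whose deletion increases $a_{ij}$ must lie on \emph{every} admissible walk of minimal weight from $i$ to $j$, and in particular on one fixed optimal simple path; since such a path has at most $n-1$ edges, at most $n-1$ factors can have $(i,j)$ as witness.

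Summing the local bound naively over the $n(n-1)$ ordered pairs gives a finite bound, but only $n(n-1)^2$, which for $n\ge 3$ exceeds the claimed $n^2(n-1)/2=n\binom{n}{2}$. Sharpening this is the heart of the matter. I would charge each factor to the \emph{source} of its witness rather than to the full pair: grouping witnesses by their first coordinate $i$, it suffices to show that for each fixed source $i$ at most $\binom{n}{2}$ factors are essential for the $i$-th row (have a witness $(i,\cdot)$), since summing over the $n$ sources then yields exactly $n\binom{n}{2}=n^2(n-1)/2$. To attack the per-source bound I would follow, for fixed $i$, the evolution of the $i$-th row under the successive right-multiplications, each factor $C_{kl}(b_r)$ replacing $a_{ik},a_{il}$ by $\min(a_{ik},a_{il}+b_r)$ and $\min(a_{il},a_{ik}+b_r)$, and try to attach injectively to every essential factor a distinct element of a fixed set of size $\binom{n}{2}$.

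The hard part will be this last injection. A single physical edge $\{k,l\}$ can genuinely be essential for one source at two different positions: the earlier occurrence may survive in a downstream entry that the later occurrence no longer touches, while the later one re-synchronises $k$ and $l$ through the same edge. Consequently there is no naive injection from source-$i$ essential factors to edges, nor to the vertices they feed, and the precise constant $\binom{n}{2}$ per source must be extracted from a more delicate exchange argument, comparing, for two essential occurrences of the same edge, the distinct destination sets that witness them. I expect this bookkeeping, rather than the structural steps that precede it, to be where the real work lies, and to be exactly the point at which the sharp constant $n\binom{n}{2}$ is earned in place of the cruder $n(n-1)^2$.
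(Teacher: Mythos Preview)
Your outline matches the paper's proof through the crude bound $n(n-1)^2$ and the reduction to a per-row bound of $\binom{n}{2}$. The gap is the final step: you flag it as ``the hard part'' needing a ``delicate exchange argument,'' but one short observation closes it immediately. Fix the source $h$ and, for each target $i\neq h$, choose among the optimal admissible simple paths from $h$ to $i$ one with the fewest edges; call it \emph{the} minimal path to $i$. Every factor essential for the entry $(h,i)$ lies on this path, so the number of row-$h$-essential factors is at most $\sum_{i\neq h}(\text{edges on the minimal path to }i)$, which equals the number of ordered pairs $(i,j)$ with $i,j\neq h$ and $j$ a vertex on the minimal path to $i$. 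The paper's key observation is that this relation is antisymmetric off the diagonal: if $j\neq i$ lies on the minimal path to $i$, then $i$ does \emph{not} lie on the minimal path to $j$. Hence for each unordered pair $\{i,j\}$ at most one of $(i,j),(j,i)$ contributes, and the total count is at most $(n-1)+\binom{n-1}{2}=\binom{n}{2}$.

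Your proposed injection of row-$h$-essential factors into some set of size $\binom{n}{2}$, and your worry that one physical edge can be essential at two different positions, are both detours: the paper never builds an injection, it simply overcounts the essential factors by the sum of path lengths and bounds that sum via the antisymmetry above. So the step you expected to be the crux is in fact a two-line count once you see the right relation to make antisymmetric.
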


\begin{proof}
Let $A$ be an element of $G_n$ and write
\[
A = C_{I_1}(a_1) \odot \cdots \odot C_{I_k}(a_k) 
\]
where the $a_j$ are non-negative real numbers and the $I_j$ are unordered
pairs of distinct numbers in $[n]$. The entry at position $(h, i)$ of
$A$, if not equal to $\infty$, is the minimum of expressions $a_{k_1} +
\ldots + a_{k_s}$, where $(I_{k_1},\ldots,I_{k_s})$ is a path from $h$
to $i$ in the complete graph on $[n]$ and $k_1 < \cdots < k_s$. Choose
such a path with $s$ minimal, and call this the minimal path from $h$
to $i$.  Since it is never cheaper to visit a vertex twice, we have $s
\leq n-1$. This shows that for each of the $n(n-1)$ pairs $(h,i)$ only
at most $(n-1)$ of the factors are necessary, and this gives an upper
bound of $n(n-1)^2$ on the quantity in the lemma. The sharper bound in
the lemma comes from the fact that if $j \neq h,i$ lies on a minimal
path from $h$ to $i$, then $i$ does not lie on a minimal path from $h$
to $j$. Hence the total number of ordered pairs $(i,j)$ with $i,j$
unequal to $h$ and $j$ on the minimal path from $h$ to $i$ is at most
$(n-1)+\binom{n-1}{2}=\binom{n}{2}$, and this bounds the number of 
factors essential for the $h$-th row of $A$. This gives the bound. 
\end{proof}

\begin{lm} \label{lm:LowerBound}
There exists an expression that is an irredundant tropical product of
$\binom{n+1}{3}$ lossy phone call matrices in $G_n$.
\end{lm}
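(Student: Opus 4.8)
The plan is to write down an explicit product of $\binom{n+1}{3}$ lossy phone call matrices and to certify its irredundancy by exhibiting, for each factor, one entry of the product that forces that factor to be present. Recall from the proof of Lemma~\ref{lm:UpperBound} that in a product $A = C_{I_1}(a_1) \odot \cdots \odot C_{I_k}(a_k)$ the $(h,i)$-entry equals the minimum, over all paths $h=v_0,v_1,\ldots,v_s=i$ whose edges $\{v_{t-1},v_t\}$ are traversed by factors of strictly increasing index, of the corresponding sums of weights. A factor $C_{I_j}(a_j)$ can be deleted without changing $A$ exactly when it lies on no uniquely-cheapest such path. Hence it suffices to equip each of our $\binom{n+1}{3}$ factors with a witnessing ordered pair $(h,i)$ for which the cheapest increasing-index path from $h$ to $i$ is unique and passes through that factor.

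For the count I would use the identity $\binom{n+1}{3} = \sum_{1\le p<q\le n}(q-p)$. The construction assigns to each ordered pair $(q,p)$ with $q>p$ its own descending path $q\to q-1\to\cdots\to p$ of length $q-p$, realized by $q-p$ \emph{dedicated} factors $C_{q,q-1},C_{q-1,q-2},\ldots,C_{p+1,p}$ placed at increasing positions in the sequence; no factor is shared between two pairs, so the total number of factors is exactly $\sum_{q>p}(q-p)=\binom{n+1}{3}$. Weights and the global ordering are then to be chosen so that, for every $(q,p)$, this dedicated descending path is the unique cheapest increasing-index path from $q$ to $p$; entry $(q,p)$ of the product then witnesses simultaneously all $q-p$ of its factors, and every factor is covered. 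The prototype is $n=3$: the product $C_{12}(a_1)\odot C_{23}(a_2)\odot C_{12}(a_3)\odot C_{23}(a_4)$ with $a_1<a_3$ and $a_4<a_2$ is irredundant of length $4=\binom{4}{3}$, its factors being witnessed respectively by the descending entries $(2,1)$, $(3,1)$, $(3,1)$, and $(3,2)$.

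This example already exhibits the mechanism that makes the argument delicate. Two copies of the same pair (here $C_{12}$, and likewise $C_{23}$) compete: a copy that is both earlier and no cheaper than another copy is dominated, hence redundant. The copies must therefore be interleaved---here separated by a call of the adjacent pair---so that a descending path, whose increasing-index constraint forbids substituting the other copy, forces the later copy, while the earlier copy is forced as the cheapest direct link. Generic weights ensure uniqueness of cheapest paths; the genuinely combinatorial part is to fix a single global order on all $\binom{n+1}{3}$ factors, together with a weighting, for which all of the descending paths are simultaneously uniquely cheapest.

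The main obstacle is precisely this simultaneous certification: one must rule out that some entry $(q,p)$ is realized more cheaply by a path stitched together from factors dedicated to \emph{other} pairs, which would make $(q,p)$'s own factors redundant. I would handle this by organizing the factors in layers---for instance by induction on $n$, adjoining a block of $\binom{n}{2}$ new factors when the vertex $n$ is introduced---and by spreading the weights over widely separated scales, so that membership in the cheapest increasing-index path is governed by the ordering constraints alone and can be checked pair by pair. Verifying that such a layering keeps every previously essential factor essential while making each new factor essential is where the real work lies.
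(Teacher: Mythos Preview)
Your outline is the paper's proof. The paper labels the new vertex $1$ rather than $n$ and sets
\[
W_n \;=\; W_{n-1}\odot P_{n-1}\odot P_{n-2}\odot\cdots\odot P_1,
\qquad
P_h \;=\; C_{12}(b_{h1})\odot C_{23}(b_{h2})\odot\cdots\odot C_{h,h+1}(b_{hh}),
\]
where $W_{n-1}$ is an irredundant word of length $\binom{n}{3}$ on the indices $2,\ldots,n$, and the weights are put on widely separated scales: those in $W_{n-1}$ smallest, those in $P_1$ much larger, those in $P_2$ larger still, and so on up to $P_{n-1}$. The ``real work'' you flag then collapses to two one-line checks. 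First, for $i,j\neq 1$ the $(i,j)$-entry of $W_n$ equals that of $W_{n-1}$, since any path using an appended factor is too expensive; hence every factor of $W_{n-1}$ remains essential. Second, the $(1,h+1)$-entry of $W_n$ is realised uniquely by the path through $P_h$: vertex~$1$ occurs only in the $C_{12}$ factors, vertex $h+1$ does not occur after $P_h$, and any $P_j$ with $j>h$ (which lies earlier in the product) sits on a strictly higher weight scale. So every factor of every $P_h$ is essential. Your $n=3$ prototype is exactly $W_3$ after relabelling and transposition.

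In short, the only thing missing from your proposal is to \emph{write down} the order and the scales; once you do, as above, the verification you anticipated being delicate turns out to be immediate.
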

\begin{proof}
We proceed by induction on $n$. For $n = 1$ there are no factors.
Let $W_{n-1}$ be an irredundant expression over $G_n$ of length
$\binom{n}{3}$ not involving the index 1. Let $P_h$ be the product
\[
P_h = C_{12}(b_{h1}) \odot C_{23}(b_{h2}) \odot \cdots \odot C_{h,h+1}(b_{hh})
\]
(of length $h$) and put
\[
W_n = W_{n-1} \odot P_{n-1} \odot P_{n-2} \odot \cdots \odot P_1 .
\]
Then the expression for $W_n$ has length $\binom{n+1}{3}$.
Order the constants involved such that those in $W_{n-1}$ are small,
those in $P_1$ (just $b_{11}$) much larger, those in $P_2$ larger again,
and those in $P_{n-1}$ the largest. The matrix that is the result of
multiplying out the expression $W_n$ has $(i,j)$-entry as found for
$W_{n-1}$ when $i,j \ne 1$, but $(1,h+1)$-entry as found for $P_h$
(since 1 is not found in $W_{n-1}$, $h+1$ is not found later than in
$P_h$, and earlier $P_j$ are too expensive). It follows that no factor
of $P_h$ is redundant.
\end{proof}


\begin{prop}
The closure of $D_n$ under tropical matrix multiplication is the support
of some finite polyhedral fan in $\RR_{\geq 0}^{n \times n}$ and
equals $G_n \cap \RR_{\geq 0}^{n \times n}$. Its topological closure in
$[0,\infty]^{n \times n}$ equals $G_n$.
\end{prop}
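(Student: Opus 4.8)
The plan is to prove the three assertions in the order: first the set-theoretic identity, then the polyhedral fan, and finally the topological statement. Write $S := \{A_1 \odot \cdots \odot A_k \mid k \ge 1,\ A_i \in D_n\}$ for the closure of $D_n$ under $\odot$. Since the $(i,j)$-entry of a min-plus product of matrices with finite nonnegative entries is again a finite nonnegative number, and the diagonal entries of such a product vanish, every element of $S$ lies in $\RR_{\geq 0}^{n \times n}$; and since $D_n \subseteq \oD_n$, every element of $S$ lies in $G_n$. Thus $S \subseteq G_n \cap \RR_{\geq 0}^{n \times n}$, the easy inclusion.

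For the reverse inclusion $G_n \cap \RR_{\geq 0}^{n \times n} \subseteq S$ I would argue as follows. Let $X \in G_n$ have all entries finite. By Lemma~\ref{lm:Generation} write $X = C_{I_1}(a_1) \odot \cdots \odot C_{I_k}(a_k)$ with $a_j \in [0,\infty]$; any factor with $a_j = \infty$ is the tropical identity and may be deleted, so all $a_j$ are finite. Now replace each $C_{I_j}(a_j)$ by the \emph{metric} matrix $M_j$ having $0$ on the diagonal, $a_j$ on the positions of $I_j$, and one common value $N$ on all remaining off-diagonal positions. Taking $N$ to exceed both every $a_j$ and every entry $x_{ij}$ of $X$ (all finite by hypothesis), each $M_j$ satisfies the triangle inequalities, so $M_j \in D_n$. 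The $(i,j)$-entry of $M_1 \odot \cdots \odot M_k$ is the minimum over sequences $i = v_0, \dots, v_k = j$ of $\sum_t (M_t)_{v_{t-1} v_t}$, and each summand equals $0$, $a_t$, or $N$ according as $v_{t-1} = v_t$, $\{v_{t-1}, v_t\} = I_t$, or neither; the same description governs $X$ with $N$ replaced by $\infty$. Any sequence using a ``generic'' edge costs at least $N > x_{ij}$ and is therefore never optimal, so both minima are attained on the same ``valid'' sequences and agree; hence $M_1 \odot \cdots \odot M_k = X$ and $X \in S$. This gives $S = G_n \cap \RR_{\geq 0}^{n \times n}$.

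It remains to produce the fan and identify the closure, and here the key input is the uniform length bound of Lemma~\ref{lm:UpperBound}: every element of $G_n$ is a product of at most $N_0 := n^2(n-1)/2$ lossy phone call matrices. For the fan, there are only finitely many sequences $(I_1, \dots, I_k)$ of edges with $k \le N_0$, and for each the map $(a_1, \dots, a_k) \mapsto C_{I_1}(a_1) \odot \cdots \odot C_{I_k}(a_k)$ is min-plus-linear, each entry being a minimum of subset-sums of the $a_t$. Such a map is linear on each cone of a finite polyhedral subdivision of $[0,\infty)^k$, so the image of each such cone is a polyhedral cone. Restricting to the finitely many edge sequences whose product is everywhere finite, the corresponding images lie in $\RR_{\geq 0}^{n \times n}$, and by the identity already proved their union is exactly $S$; after a common refinement this exhibits $S$ as the support of a finite polyhedral fan, in particular a closed subset of $\RR_{\geq 0}^{n \times n}$. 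For the closure, the same finite union, now with the $a_t$ ranging over the \emph{compact} interval $[0,\infty]$ and $\odot$ continuous, expresses $G_n$ as a finite union of compact sets, so $G_n$ is closed and $\overline{S} \subseteq G_n$. Conversely each generator $C_{kl}(a)$ lies in $\oD_n = \overline{D_n}$, hence is a limit of metric matrices, and continuity of $\odot$ then makes every product of generators a limit of products of metric matrices, giving $G_n \subseteq \overline{S}$ and therefore $\overline{S} = G_n$.

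The one genuinely non-formal step is the exact-equality claim in the second paragraph: the large-$N$ substitution must reproduce $X$ exactly rather than merely in the limit. This is precisely where finiteness of all entries of $X$ enters, through the bound $N > \max_{i,j} x_{ij}$ that forces every optimal path to avoid the generic edges. Everything else is bookkeeping made possible by Lemma~\ref{lm:UpperBound}, which converts the a priori unbounded generation of $G_n$ into finitely many tropical-linear maps and thereby simultaneously yields the polyhedral structure, the closedness of $S$, and the compactness of $G_n$.
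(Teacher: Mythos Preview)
Your proof is correct and follows the same route as the paper's: both rely on Lemma~\ref{lm:Generation} to reduce to products of lossy phone call matrices and on the uniform length bound of Lemma~\ref{lm:UpperBound} to express $G_n$ as a finite union of images of orthants under piecewise linear maps. The paper dismisses the identity $S=G_n\cap\RR_{\geq 0}^{n\times n}$ and the closure statement as ``straightforward''; your large-$N$ substitution argument is a clean way to make the first of these explicit, and your compactness argument for $G_n$ is exactly what one would expect the paper to have in mind.
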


Note that this is Theorem~\ref{thm:main} minus the claim that
the dimension of that fan is (not more than) $\binom{n}{2}$; this
claim will be proved in Section~\ref{sec:Proof}.

\begin{proof}
By Lemma~\ref{lm:Generation} and the proof of Lemma~\ref{lm:UpperBound}
the closure of $D_n$ under tropical matrix multiplication is a finite union of
images of orthants $\RR_{\geq 0}^k$ with $k \leq n(n-1)^2$ under piecewise
linear maps. Such an image is the support of some polyhedral fan.
The remaining two statements are straightforward.
\end{proof}

From now on, we will sometimes use the term ``polyhedral fan'' for the
topological closure in $[0,\infty]^N$ of a polyhedral fan in $\RR_{\geq
0}^N$. Thus $G_n$ itself is a polyhedral fan in $[0,\infty]^{n \times n}$.

Recall that the {\em Kleene star} of $A \in [0,\infty]^{n \times n}$
is defined as
\[ 
A^*:=I \oplus A \oplus A^{\odot 2} \oplus \cdots \\
=I \oplus A \oplus A^{\odot 2} \oplus \cdots \oplus
A^{\odot(n-1)}
=(I \oplus A)^{\odot (n-1)}
\]
where $I$ is the tropical identity matrix \cite[p.  21]{Butkovic10}. The
$(i,j)$-entry of $A^*$ records the length of the shortest path from $i$
to $j$ in the directed graph on $[n]$ with edge lengths $a_{ij}$. From
this interpretation it follows readily that for $A_1,\ldots,A_s \in
[0,\infty]^{n \times n}$ with zero diagonal, and $\pi \in \Sym(s)$,
we have $(A_1 \odot \cdots \odot A_s)^* =
(A_{\pi(1)} \odot \cdots \odot A_{\pi(s)})^*$.

\begin{lm} \label{lm:Kleene}
The Kleene star maps $G_n$ into its subset $\oD_n$.
\end{lm}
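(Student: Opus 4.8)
The plan is to identify $\oD_n$ explicitly and then verify the defining properties for $A^*$. First I would record that $\oD_n$ consists precisely of those $B \in [0,\infty]^{n\times n}$ that are symmetric, have zero diagonal, and satisfy all triangle inequalities $B_{ij} \le B_{ik}+B_{kj}$ (with the usual conventions on $\infty$). That these matrices exhaust $\oD_n$ is a routine approximation: given such a $B$, replace every $\infty$-entry by a common large value $t$; one checks case by case that the triangle inequalities survive for $t$ large, so the result lies in $D_n$, and letting $t \to \infty$ exhibits $B$ as a limit of metrics. I expect this identification to be the only genuinely routine-but-fiddly point; everything else is conceptual.

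With this description in hand, the lemma amounts to three checks on $A^*$, of which two are immediate. Since $A^*_{ij}$ is the length of a shortest $i$-$j$ path in the weighted digraph on $[n]$ with edge lengths $a_{uv}$, and since $A$ has zero diagonal, we get $A^*_{ii}=0$ at once; shortest-path lengths always obey the triangle inequality $A^*_{ij}+A^*_{jk}\ge A^*_{ik}$; and all entries plainly lie in $[0,\infty]$. Hence the whole content of the lemma is the \emph{symmetry} of $A^*$, and this is the step I regard as the heart of the argument.

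To prove symmetry I would combine two elementary facts about the Kleene star. On one hand, transposition is an anti-automorphism of the min-plus matrix monoid, so $(X\odot Y)^T = Y^T \odot X^T$ and $(I\oplus A)^T = I\oplus A^T$; since $A^* = (I\oplus A)^{\odot(n-1)}$ is a power of a single matrix, reversing the order of its factors changes nothing, and therefore $(A^*)^T = (I\oplus A^T)^{\odot(n-1)} = (A^T)^*$. On the other hand, by Lemma~\ref{lm:Generation} I would write $A = C_{I_1}(a_1)\odot\cdots\odot C_{I_k}(a_k)$ with each lossy phone-call matrix $C_{I_j}(a_j)$ symmetric and of zero diagonal. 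Transposing reverses the product, giving $A^T = C_{I_k}(a_k)\odot\cdots\odot C_{I_1}(a_1)$, which is the \emph{same} list of symmetric, zero-diagonal factors in the opposite order. The permutation-invariance of the Kleene star recorded just before the lemma then yields $(A^T)^* = A^*$. Combining the two displays gives $(A^*)^T = (A^T)^* = A^*$, so $A^*$ is symmetric and therefore lies in $\oD_n$, completing the proof.
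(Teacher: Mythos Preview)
Your proof is correct and follows essentially the same route as the paper's: both write $A$ as a product of symmetric lossy phone-call matrices, use that transposition anti-commutes with $\odot$ and commutes with the Kleene star, and invoke the permutation-invariance $(C_1\odot\cdots\odot C_k)^*=(C_{\pi(1)}\odot\cdots\odot C_{\pi(k)})^*$ recorded just before the lemma to conclude $(A^*)^T=A^*$. The only difference is packaging: you spell out the identification of $\oD_n$ with symmetric, zero-diagonal, triangle-inequality matrices (allowing $\infty$) and verify the non-symmetry conditions explicitly, whereas the paper compresses all of this into the phrase ``a symmetric Kleene star and hence a metric matrix.''
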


\begin{proof}
Let $A \in G_n$ be the tropical product of lossy phone call
matrices $C_1,\ldots,C_k$. Note that $C_i^\tp=C_i$. We have 
\begin{align*} 
A^*&=(C_1 \odot \cdots \odot C_k)^*
=(C_k \odot \cdots \odot C_1)^*
=(C_k^\tp \odot \cdots \odot C_1^\tp)^*\\
&=((C_1 \odot \cdots \odot C_k)^\tp)^*
=((C_1 \odot \cdots \odot C_k)^*)^\tp
=(A^*)^\tp,
\end{align*}
where we have used the remark above, the fact that transposition
reverses multiplication order, and the fact that Kleene star commutes
with transposition. Thus $A^*$ is a symmetric Kleene star and hence a
metric matrix.
\end{proof}

\section{Graphs with detours} \label{sec:Detours}

In the next two sections we will visualise elements of the lossy gossip
monoids $G_3$ and $G_4$, as well as the polyhedral structures on these
monoids. We will do this through combinatorial gadgets that we dub {\em
graphs with detours}. We first recall realisations of ordinary metrics,
i.e., elements of $D_n$ (see, e.g., \cite{Dress84,Imrich84}).

Let $\Gamma = (V,E)$ be a finite, undirected graph and $w: E \rightarrow
\RR_{\geq 0}$ be a function assigning lengths to the edges of $\Gamma$. The
weight of a path in $(\Gamma, w)$ is the sum of the weights of the
individual edges in the path.  A map $\ell: [n] \rightarrow V$ is called a
\textit{labelling}, or \textit{$[n]$-labelling}, if we need to be precise,
and the pair $(\Gamma, \ell)$ is referred to as a labelled graph, or an
$[n]$-labelled graph.

A weighted $[n]$-labelled graph gives rise to a matrix $A(\Gamma, w,
\ell)$ in $D_n$ whose entry at position $(i, j)$ is the minimal weight of
a path between $\ell(i)$ and $\ell(j)$. We say that the weighted labelled
graph $(\Gamma, w, \ell)$ \textit{realises} the matrix $A(\Gamma, w,
\ell)$. Any matrix $X \in D_n$ has a realisation by some weighted,
$[n]$-labelled graph, e.g., the graph with vertex set $[n]$, the
entries of $X$ as weights, and $\ell$ equal to the identity. However,
typically more efficient realisations exist, in the following sense. A
weighted, $[n]$-labelled graph $(\Gamma=(V,E),w,\ell)$ is called an
{\em optimal realisation} of $X$ if the sum $\sum_e w(e)$ is minimal
among all realisations \cite{Imrich84}. We will, moreover, require of
an optimal realisation that no edges get weight $0$ (since such edges
can be removed and their endpoints identified), and that no vertices
in $V \setminus \ell([n])$ have valency $2$ (since such vertices can be
removed and their incident edges glued together). Optimal realisations of any $X \in D_n$
exist \cite{Imrich84}, and there is an interesting question concerning
the uniqueness of optimal realisations for generic $X$ \cite[Conjecture
3.20]{Dress84}.

Our first step in describing the cones of $G_3$ and $G_4$ is to find
weighted labelled graphs that realise the elements of $D_3,D_4$, as
follows (for much more about this see \cite{Dress84,Dress06}). We write
$J_0$ for the matrix of the appropriate size with all entries $0$.

\begin{ex}
We give optimal realisations of the elements of $D_n$, for $n
= 2,3,4$. For the cases $n = 5, 6$ see \cite{Koolen09} and
\cite{Sturmfels04}.
\begin{enumerate}
\item An element of $D_2 \setminus \{J_0\}$ is optimally realised by the
graph on two vertices having one edge with the right weight. The choice
of labelling is inconsequential as long as it is injective. The matrix
$J_0$ is optimally realised by the graph on one vertex.

\begin{figure}[ht]
\begin{center}
\includegraphics[scale=0.7]{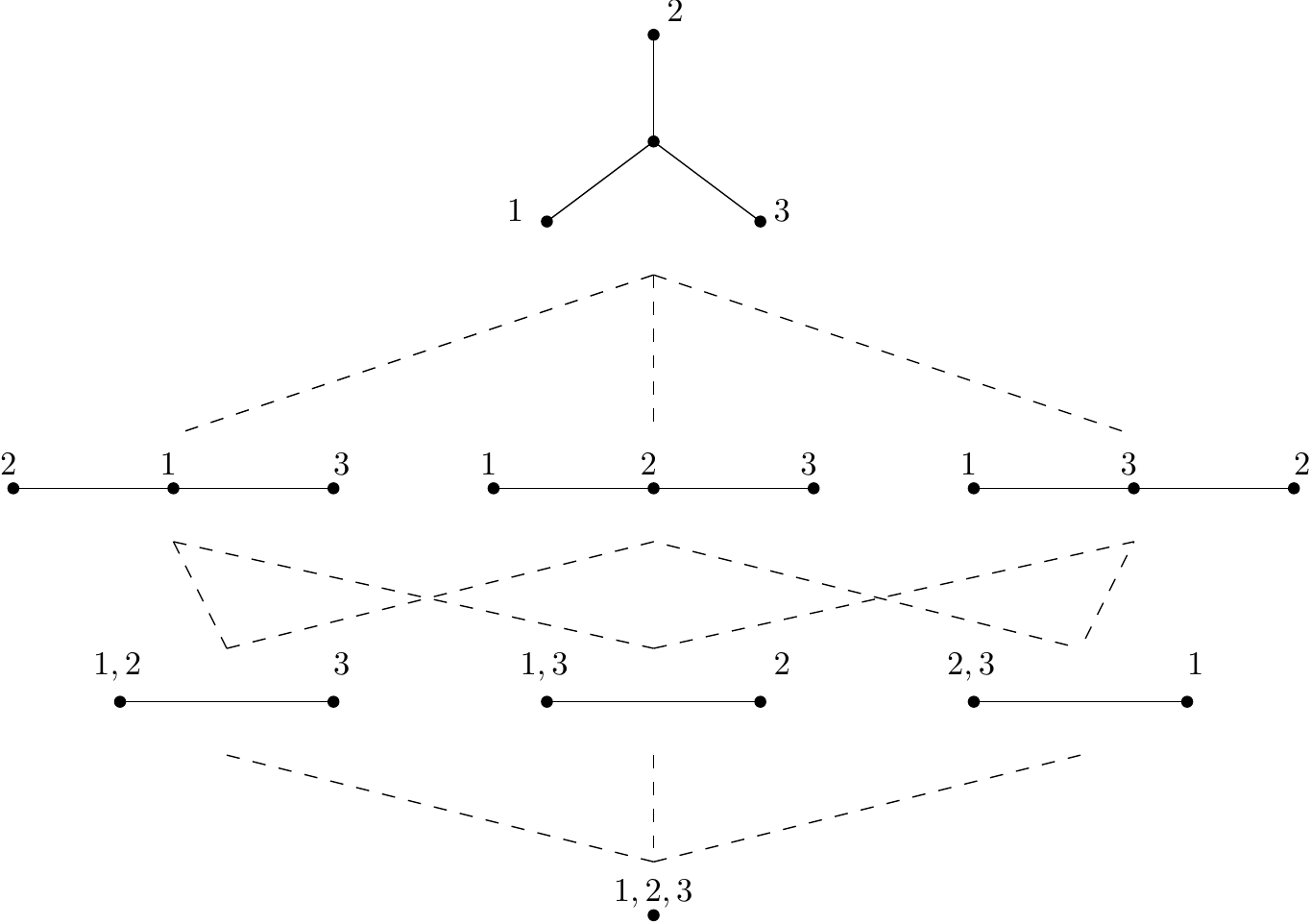}
\end{center}
\caption{\label{fig:OptPoset3}Minimal realisations 
of three-point metrics.}
\end{figure}

\item Any matrix in $D_3$ is realised by the top labelled graph of the
poset depicted in Figure \ref{fig:OptPoset3} with suitable edge weights
(note that we allow these to be zero), but only the matrices in the
relative interior of the cone $D_3$ are optimally realised by it. Matrices
on the boundary are optimally realised by some graph further down the
poset, depending on the smallest face of $D_3$ in which the matrix lies.

\begin{figure}[ht]
\begin{center}
\includegraphics[scale=0.4]{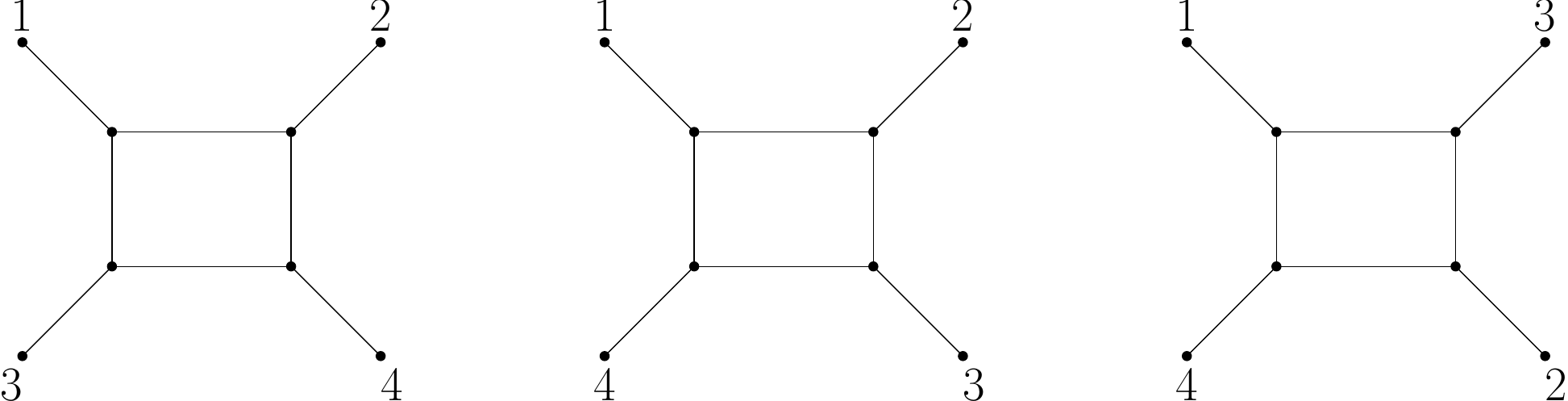}
\end{center}
\caption{\label{fig:OptGraphs4}Minimal realisations of
four-point metrics. The parallel sides of the middle rectangle have equal weight.}
\end{figure}

\item The case of $D_4$ is similar to that of $D_3$ in the sense that
there exists a single graph $\Gamma$ which, appropriately labelled and
weighted, realises any $X \in D_4$.  However, unlike for $D_3$, three
distinct labellings are required. The labelled graphs are depicted in
Figure~\ref{fig:OptGraphs4}.  For graphs in the relative interior of
$D_4$, the given realisation is optimal (and in fact the unique optimal
realisation). \hfill $\diamondsuit$

\end{enumerate}
\end{ex}

We now extend realisation of metric matrices by graphs to realisations
of arbitrary matrices in $\RR_{\geq 0}^{n \times n}$ with zeroes on the
diagonal.  For this we need an extension of the concept of a labelled
weighted graph. Let $i$ and $j$ be distinct elements of $[n]$. A {\em
detour} from $i$ to $j$ in an $[n]$-labelled weighted graph is simply
a walk $p$ starting at $\ell(i)$ and ending at $\ell(j)$ that has
larger total weight than the path of minimal weight between $\ell(i)$
and $\ell(j)$. Such a walk is allowed to traverse the same edge more
than once. The data specifying the detour is the triple $(i, j, p)$. A
\textit{labelled weighted graph with detours} is a tuple consisting of
a labelled weighted graph and a finite set of detours between distinct
ordered pairs $(i,j)$.

Let $(\Gamma, w, \ell, \mathcal{D})$ be an $[n]$-labelled weighted
graph with set of detours $\mathcal{D}$. It gives rise to a matrix
$A(\Gamma, w, \ell, \mathcal{D})$ whose entry at position $(i, j)$
equals the weight of the detour from $i$ to $j$, if there is any, or the
weight of a path of minimal weight between $i$ and $j$, if there is no
detour between $i$ and $j$ in $\mathcal{D}$. In particular, $A(\Gamma,
w, \ell, \mathcal{D})$ need not be symmetric, but its diagonal entries
are $0$. Again, if $X \in \RR_{\geq 0}^{n \times n}$ and $X = A(\Gamma,
w, \ell, \mathcal{D})$, then $(\Gamma, w, \ell, \mathcal{D})$ is said
to realise $X$. Any non-negative matrix with zeroes on the diagonal is
realised by some labelled weighted graph with detours. Observe also that
replacing all detours $(i,j,p)$ by the detours $(j,i,p')$, where $p'$
is the opposite of $p$, corresponds to transposing the realised matrix.

\begin{figure}
\begin{center}
\subfigure[\label{fig:DetourL3}Path with a single detour from $1$ to $2$.]%
{\includegraphics[scale=0.5]{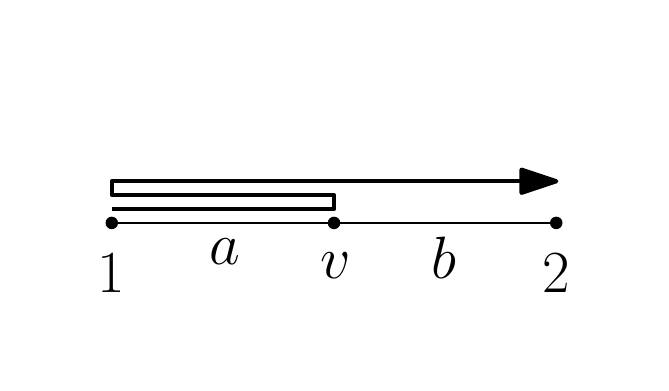}}
\quad \quad \quad \quad
\subfigure[\label{fig:DetourC4b}Graph with $4$ detours.]%
{\includegraphics[scale=0.5]{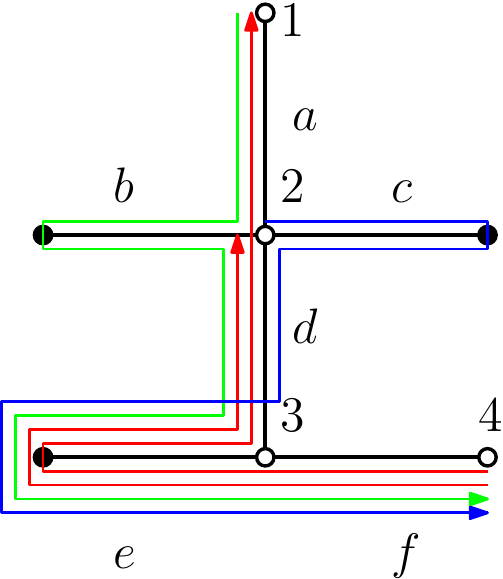}}
\caption{Examples of labelled weighted graphs with detours.}
\end{center}
\end{figure}

\begin{ex}
We give two examples of labelled weighted graphs with detours.  First,
the graph in Figure~\ref{fig:DetourL3} has a single detour from $1$
to $2$, and realises the matrix
\[ \begin{bmatrix} 0 & 3a+b \\ a+b & 0 \end{bmatrix}. \]
Except when $a=0$, this matrix is not in $G_2$. The example in
Figure~\ref{fig:DetourC4b} is more interesting. It has detours between
the ordered pairs $(1,4),(2,4),(4,1),(4,2)$. The weights $a,b,c,d,e,f$
are non-negative. By varying this six-tuple in $\RR_{\geq 0}^6$ this graph
with detours realises the $6$-dimensional cone of all matrices of the form
\[ 
A=
\begin{bmatrix}
0 & a & a+d & a+2b+d+2e+f\\
a & 0 & d & 2c+d+2e+f\\
a+d & d & 0 & f \\
f+2e+d+a & f+2e+d & f & 0
\end{bmatrix}. 
\]
Observe that both $(1,4)$ and $(4,1)$ are detours, and their lengths are
restricted by the inequality $a_{14} \geq a_{41}$ (indeed, the difference
equals $2b$). This $6$-dimensional cone is one of the maximal cones in
$G_4$, namely, cone $C_{10}$ in Figure~\ref{fig:cones_S4} below. The
graph-with-detours in Figure~\ref{fig:DetourC4b} represents these
inequalities in a visually attractive manner, but one also sees in one
glance that the cone of all matrices of the form is simplicial: it is
the image of $\RR_{\geq 0}^6$ under an injective linear transformation
into $\RR_{\geq 0}^{4 \times 4}$. This motivates our choice for
graphs-with-detours to represent cones of $G_3$ and, more importantly,
$G_4$. \hfill $\diamondsuit$
\end{ex}

By Lemma~\ref{lm:Kleene}, the Kleene star of a matrix $A$ in $G_n$ lies
in $\oD_n$. Thus it makes sense to look for a realisation of $A$ by a
labelled weighted graph with detours that, when forgetting the detours,
realises $A^*$. This is what we will do in the next two sections for $n=3$
and $n=4$.

\section{Three gossipers} \label{sec:Three}

Since $G_3$ is a pointed fan, no combinatorial information is lost
by intersecting that fan with a sphere centered around the all-zero
matrix. The resulting spherical polyhedral complex is depicted in
Figure \ref{fig:S3}. Detour graphs realising the maximal cones
can be constructed by realising the arrows in an arbitrary
manner as detours in the undirected graph. The middle cone is (the topological closure of) $D_3$, with
its three codimension-one faces corresponding to the second layer
in Figure~\ref{fig:OptPoset3} and its three codimension-two faces
corresponding to the third layer.

\begin{figure}[ht]
\begin{center}
\includegraphics[scale=0.5]{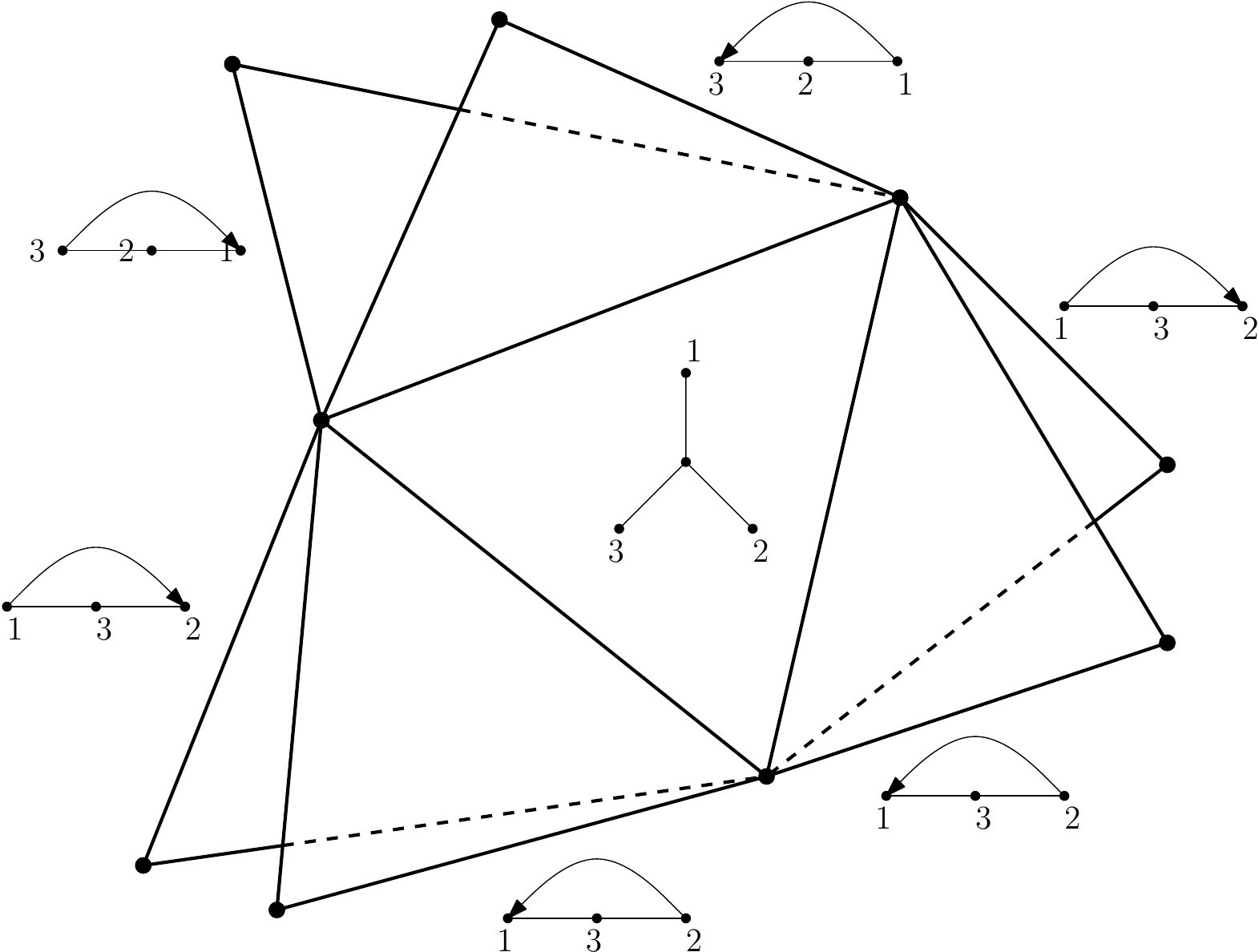}
\end{center}
\caption{\label{fig:S3}Representation of the spherical complex of
$G_3$. The labelled graphs with detours corresponding to the maximal
cells are indicated. The middle triangle represents the cone of distance
matrices; and on its codimension-one faces one of the three points ends
up between the other two points. The remaining codimension-one faces of
the remaining six cones are where one of the edge lengths in the Kleene
star becomes zero.}
\end{figure}

The computations to show that Figure \ref{fig:S3} gives all of $G_3$
are elementary and can be done by hand. We use pictorial notation and
write $A(\Gamma)$ for the matrix realised by a labelled weighted graph
with detours $\Gamma$. Here, instead of drawing a detour as
a walk, we draw it as an arrow whose length
is assumed to exceed the distance in the undirected graph. 
First, to prove that the matrices $A(\Gamma)$
with $\Gamma$ as in the figure are indeed in $G_3$ we observe that
\begin{equation}\label{exp:DecompS3}
A(\ \raisebox{-0.33\height}{\includegraphics[scale=0.4]{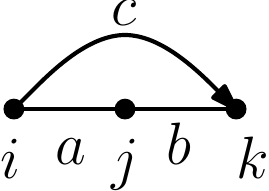}}\ )
=
C_{jk}(b)\, \odot \,
A(\ \raisebox{-0.40\height}{\includegraphics[scale=0.4]{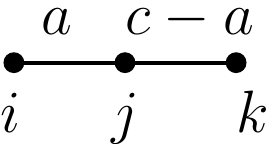}}\ )
=
A(\ \raisebox{-0.40\height}{\includegraphics[scale=0.4]{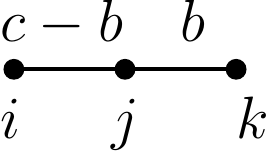}}\ )
\, \odot \,
C_{ij}(a),
\end{equation}
for any $c \geq a+b$ (and $a,b \geq 0$ as always). Together with the fact that
$C_{ij}(a) \odot C_{ij}(d)=C_{ij}(a \oplus d)$ this implies
that 
\[A(\ \raisebox{-0.33\height}{\includegraphics[scale=0.4]{figs/MCD}}\ )\, \odot\, C_{ij}(d)\mbox{ and }
C_{jk}(d) \, \odot\, A(\ \raisebox{-0.33\height}{\includegraphics[scale=0.4]{figs/MCD}}\ ) 
\]
are contained in the complex of Figure \ref{fig:S3} for all choices of 
$a, b, c$ and $d$ with $c \geq a+b$. Next we compute 
\begin{eqnarray*}
C_{ij}(d) \, \odot \,
A(\ \raisebox{-0.33\height}{\includegraphics[scale=0.4]{figs/MCD}}\ ),
& = &
\left\{\begin{array}{ll}
A(\ \raisebox{-0.33\height}{\includegraphics[scale=0.4]{figs/MCD}}\ ),
& c - b \leq d,\\[1.2em]
A(\ \raisebox{-0.33\height}{\includegraphics[scale=0.4]{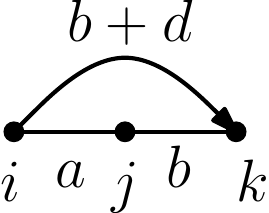}}\ ),
& a \leq d \leq c - b, \text{ and}\\[1.2em]
A(\ \raisebox{-0.33\height}{\includegraphics[scale=0.4]{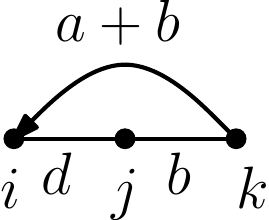}}\ ),
& 0 \leq d \leq a;
\end{array}\right.
\end{eqnarray*}
and, for $m := \max(a-b,b-a)$,
\begin{eqnarray*}
C_{ik}(d) \, \odot \,
A(\ \raisebox{-0.33\height}{\includegraphics[scale=0.4]{figs/MCD}}\ )
& = &
\left\{\begin{array}{ll}
A(\ \raisebox{-0.33\height}{\includegraphics[scale=0.4]{figs/MCD}}\ ),
& c \leq d, \\[1.2em]
A(\ \raisebox{-0.33\height}{\includegraphics[scale=0.4]{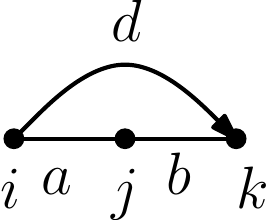}}\ ),
& a + b \leq d \leq c,\\[1.2em]
A(\ \raisebox{-0.4\height}{\includegraphics[scale=0.4]{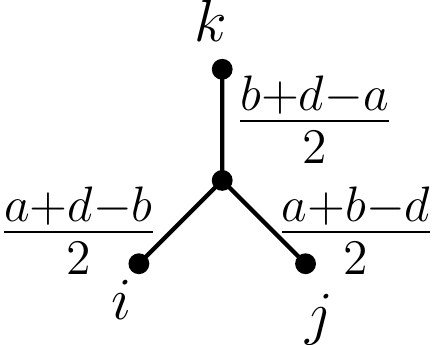}}\ ),
& m \leq d \leq a+b, \text{ and}\\[1.6em]
A(\ \raisebox{-0.33\height}{\includegraphics[scale=0.4]{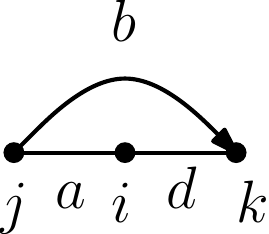}}\ ),
& 0 \leq d \leq m.
\end{array}\right.\\[1em]
\end{eqnarray*}
It follows by transposition that the products
\[
A(\ \raisebox{-0.33\height}{\includegraphics[scale=0.4]{figs/MCD}}\ )\, \odot\, C_{ik}(d),\mbox{ and }
A(\ \raisebox{-0.33\height}{\includegraphics[scale=0.4]{figs/MCD}}\ ) \odot\, C_{jk}(d)
\]
are also contained in one of the cones of Figure~\ref{fig:S3}.
This concludes the proof of Theorem~\ref{thm:n234} for $n=3$.

\section{Four gossipers} \label{sec:Four}

The computations for $G_4$ are too cumbersome to do by hand. Instead
we used {\tt Mathematica} to compute a fan structure on $G_4$. Figure
\ref{fig:cones_S4} gives realising graphs with detours of all the
cones of $G_4$, up to transposition and the action of $\Sym(4)$. The {\em
surplus length} of a detour from $i$ to $j$ is defined as the difference
between the length of the detour and the minimal distance between $i$
and $j$ in the graph. Two detours from $i$ to $j$ and from $k$ to $l$
have the same color if their surplus lengths are equal.

\begin{figure}[ht]
\begin{center}
\includegraphics[scale=0.5]{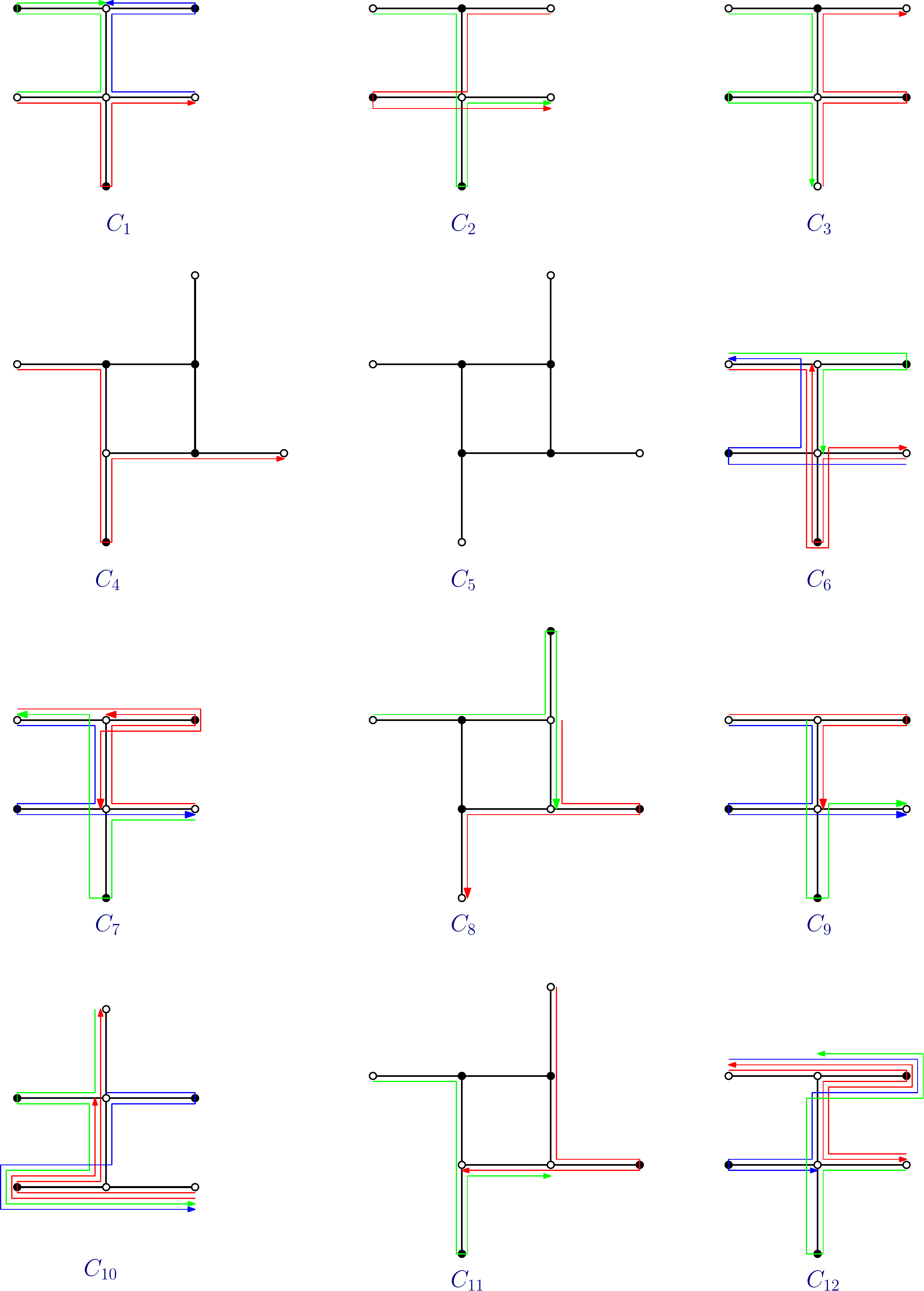}
\caption{\label{fig:cones_S4}Orbit representatives of labelled weighted
graphs with detours realising a polyhedral fan structure on $G_4$ with
simplicial cones. The white vertices are the labelled vertices.}
\end{center}
\end{figure}

\begin{figure}[ht]
\begin{center}
\includegraphics[scale=0.4]{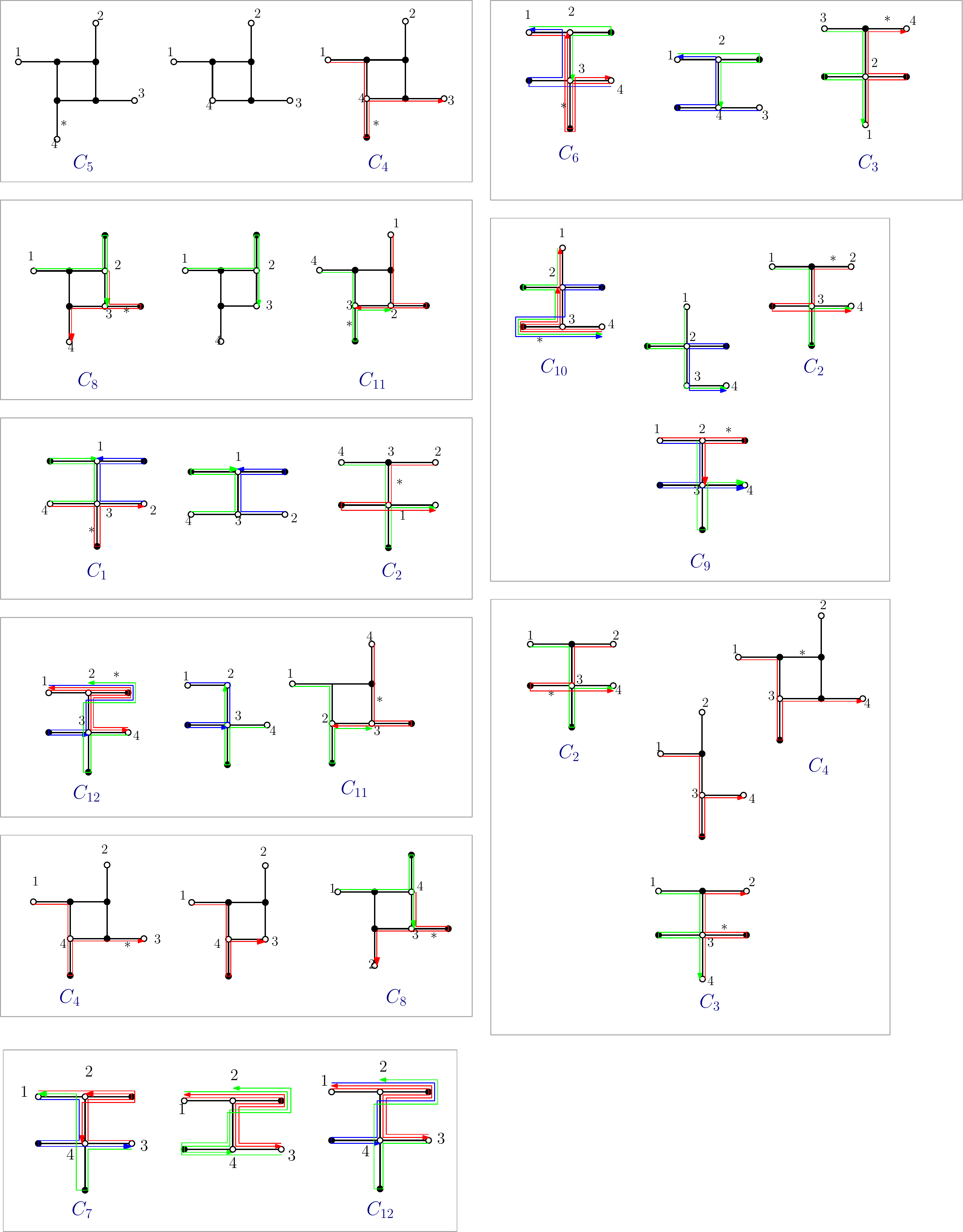}
\caption{\label{fig:connected}Walking from maximal cones to maximal
cones by edge contraction, except in case $C_7$--$C_{12}$. The edge to
be contracted is indicated by an asterix $*$. This shows that the cones
in the grey boxes intersect in a cone of dimension $5$. The intersection
between $C_7$ and $C_{12}$ is obtained by setting equal certain surplus
lengths in the graphs representing $C_7$ and $C_{12}$.}
\end{center}
\end{figure}

These graphs were obtained as follows. First, generate all $6^6$ possible
piecewise linear affine maps $[0,\infty]^6 \rightarrow G_4$ of the form
\[
(a_1, \ldots, a_6) \rightarrow C_{I_1}(a_1) \odot C_{I_2}(a_2) \odot \ldots \odot C_{I_6}(a_6),
\]
where $I_1,\ldots,I_6$ are unordered pairs of distinct indices. Among
the image cones, select only the six-dimensional ones, and compute
their linear spans. There are 289 different linear spans. Compute the
$\Sym(4)$-orbits on these spans; this yields $16$ orbits.  Choose a
representative for each of these orbits on spans, and for each
representative select all cones with that span. It turns out that, for
each representative span, one of the cones contains all other cones. To
show that the orbits of these 16 maximal cones give all of $G_4$,
left-multiply each of these 16 cones with all possible lossy phone call
matrices and show that the resulting unions of cones are contained in
the union of the 289 maximal cones; this is facilitated by the fact that
each of these cones is the intersection of $G_4$ with (the topological
closure in $[0,\infty]^{n \times n}$ of) a six-dimensional subspace.
Then we check that the faces of these 289 six-dimensional cones do
indeed form a polyhedral fan, i.e., that the intersection of any two
of these faces is a common face of both. In the process of this check,
which we performed both with \verb+Mathematica+ and (more rapidly)
with \verb+polymake+ \cite{polymake}, we find that the fan has $f$-vector
$(43,327,1042,1560,1092,289)$. This latter check yields the statement
about the unique coarsest fan structure in Theorem~\ref{thm:n234}.

Next, the group $\ZZ/2\ZZ$ acts on $G_4$ by transposition. Taking orbit
representatives under the larger group $\Sym(4) \times (\ZZ/2\ZZ)$ from
among the $16$ yields 11 cones. Among these, 9 are simplicial (have
six facets), the cone $D_4$ has 12 facets, and the remaining cone has
9 facets. The cone $D_4$ is the union of three simplicial cones (see
Figure~\ref{fig:OptGraphs4}), which are permuted by $\Sym(4)$, so we need
only one. This is $C_5$ in Figure~\ref{fig:cones_S4}. The cone with $9$
facets turns out to be the union of two simplicial cones. Splitting this
up yields $C_{11}$ and $C_{12}$ in the figure. It turns out
that each $C_i$ is the image of $\RR_{\geq 0}^6$ under a linear map into $\RR_{\geq
0}^{4 \times 4}$ with non-negative integral entries with respect to
the standard bases, and that these maps can be realised using weighted,
labelled graphs with detours. These are the graphs in the picture. The
graphs without the detours realise the Kleene star $A^*$ with $A \in C_i$.

Finally, connectivity in codimension $1$ is proved by Figure
\ref{fig:connected}. It shows that any maximal cone can be connected to
$D_4$ by passing through (relatively open) codimension-one faces; note the
specified labelling. Most intersections in Figure \ref{fig:connected} are
of a simple type, where one of the edge weights becomes zero to go from
one cone to the neighbouring cone; these contracted edges are then marked
with an asterix on both sides.  The only exception is the connection from
$C_7$ to $C_{12}$. Although (suitable elements in the $\Sym(4)$-orbits of)
these cones intersect in a five-dimensional boundary cone, the boundary
cone is obtained from the parametrizations specified by the graphs with
detours by restricting the parametrization to a hyperplane where two of
the weights are equal.  This leads to the following theorem.

\begin{thm}
The cones realised by the graphs of Figure
\ref{fig:cones_S4} give a polyhedral fan structure on $G_4$.
This polyhedral fan is pure of dimension $6$ and connected
in codimension $1$. Its intersection with a sphere around
the origin is a simplicial spherical complex. Moreover,
every element of $G_4$ is the product of (at most) $6$ lossy
phone call matrices.
\end{thm}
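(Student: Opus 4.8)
The plan is to make the argument for $G_4$ essentially computational, exploiting that $G_4$ is generated as a monoid by the one-parameter families $C_{kl}(a)$ (Lemma~\ref{lm:Generation}), so that every element is a finite tropical product of these. First I would enumerate candidate maximal cones by sweeping over all $6^6$ sequences $(I_1,\ldots,I_6)$ of the six unordered pairs in $[4]$ and recording, for each, the image cone of the piecewise-linear map $(a_1,\ldots,a_6)\mapsto C_{I_1}(a_1)\odot\cdots\odot C_{I_6}(a_6)$. Each such image has dimension at most $6$; I would keep only the six-dimensional ones, compute their linear spans (finding $289$ distinct spans in $16$ orbits under $\Sym(4)$), and for each span verify that one cone with that span contains all others sharing it. This yields $16$ orbit representatives, which I would then realise concretely as images of $\RR_{\geq 0}^6$ under injective, nonnegative-integer linear maps encoded by the graphs-with-detours of Figure~\ref{fig:cones_S4}. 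Injectivity of these maps delivers simpliciality of each cone, and hence the claim that intersecting the fan with a sphere gives a simplicial spherical complex.

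The crux is to show that the union $U$ of these $289$ cones is \emph{all} of $G_4$, not merely a subset. The inclusion $U\subseteq G_4$ is automatic, since each cone is by construction the image of a product map into $G_4$. For $G_4\subseteq U$ I would induct on the number of phone-call factors: the base case is that the generators lie in $U$ (indeed $\oD_4$, split into three simplicial pieces including $C_5$, occurs among the representatives), and the inductive step reduces to proving that $U$ is stable under left-multiplication by an arbitrary generator $C_{kl}(a)$. Concretely, for each representative cone $C_i$ and each pair $kl$, I would compute the region swept out by $C_{kl}(a)\odot X$ as $X$ ranges over $C_i$ and $a$ over $[0,\infty]$, and check that it is covered by cones of $U$. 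This is the step that is hopeless by hand and must be delegated to \texttt{Mathematica} and cross-checked in \texttt{polymake}; it is greatly simplified by the fact that each $C_i$ equals the intersection of $G_4$ with a six-dimensional subspace, so membership in $U$ reduces to a finite, span-by-span inequality test. This closure verification is the main obstacle: it both justifies that length-$6$ products already produce every maximal cone (an a~priori guess validated a~posteriori) and is by far the largest computation. Once it holds, induction gives $G_4=U$, whence $\dim G_4=6$, the fan is pure of dimension $6$, and — since every maximal cone is parametrised by a product of exactly six factors — every element of $G_4$ is a tropical product of at most six lossy phone-call matrices.

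It remains to verify the fan axioms and codimension-one connectivity. For the fan structure I would check that the intersection of any two of the (faces of the) $289$ cones is a common face of both; running this check records the $f$-vector $(43,327,1042,1560,1092,289)$ and simultaneously certifies uniqueness of the coarsest fan structure asserted in Theorem~\ref{thm:n234}. For connectivity in codimension $1$ I would exhibit, for each maximal cone, a chain of codimension-one adjacencies back to $D_4$, as in Figure~\ref{fig:connected}: most adjacencies come from contracting a single edge (sending one weight to $0$) in the graph-with-detours, so the shared facet is visibly a common codimension-one face. The one genuinely delicate adjacency is $C_7$--$C_{12}$, where the five-dimensional shared cone is not an edge contraction but is obtained by restricting both parametrisations to the hyperplane equating two surplus lengths; here the subtlety is to confirm that the two restricted parametrisations land on the \emph{same} five-dimensional cone rather than merely meeting in lower dimension.
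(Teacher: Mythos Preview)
Your proposal is correct and mirrors the paper's own argument almost step for step: the $6^6$ enumeration, the $289$ spans in $16$ $\Sym(4)$-orbits, the closure check under left-multiplication by generators (facilitated by each cone being $G_4$ intersected with its span), the fan-axiom verification yielding the stated $f$-vector, and the codimension-one connectivity via edge contractions with the single exceptional $C_7$--$C_{12}$ adjacency. The only small slip is that not all $16$ span-representative cones are themselves simplicial---the paper further mods out by transposition to get $11$ cones, of which $D_4$ and one other must be subdivided, producing the $12$ simplicial cones of Figure~\ref{fig:cones_S4}---but since you already note the $D_4$ subdivision, this is a minor bookkeeping point rather than a gap.
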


\begin{re}
The spherical complex of Figure~\ref{fig:S3} clearly has trivial
homology. This phenomenon persists for $n=4$: a computation using
\verb+polymake+ shows that all homology groups of the intersection of
$G_4$ with the unit sphere in $16$-dimensional space are zero. We do
not know whether this is true for general $n$.
\end{re}

\section{Five gossipers} \label{sec:Five}
More extensive computations establish the claimed facts about $G_5$.
Since $6^6$ is a small number, but $10^{10}$ is not,
the computation requires many refinements. We omit the details.
It turns out that every element has an expression as a tropical product
of at most 10 lossy phone call matrices.  The set $G_5$ is the support
of a polyhedral fan which is pure of dimension 10, and connected in
codimension 1.
Some statistics are given in Table~\ref{table:stats}. The single orbit
of size 1 is that of $D_n$.

\begin{table}[ht]
\begin{center}
\begin{tabular}{cccl}
$n$ & \# spans & \# orbits & orbit size distribution \\
\hline
2 & 1 & 1 & 1\tim1 \\
3 & 7 & 2 & 1\tim1, 1\tim6 \\
4 & 289 & 16 & 1\tim1, 6\tim12, 9\tim24 \\
5 & 91151 & 787 & 1\tim1, 2\tim20, 1\tim30, 48\tim60, 735\tim120
\end{tabular}
\medskip
\end{center}
\caption{\label{table:stats}Numbers of subspaces spanned by full-dimensional
cones, and their numbers of orbits under ${\rm Sym}(n)$.}
\end{table}
The situation for $n = 5$ is more complicated than that for
smaller $n$ in that it is no longer true that the subspace spanned by a
polyhedral cone of maximal dimension intersects $G_5$ in a convex cone
(recall that for $n=4$ this did hold, and that we used this in the proof
that $G_4$ has a unique coarsest fan structure).

\begin{ex}
Consider the open, $10$-dimensional cone $P$ consisting of all matrices 
$$
C_{45}(a)C_{34}(b)C_{45}(c)C_{24}(d)C_{45}(e)
C_{14}(f)C_{12}(g)C_{23}(h)C_{13}(i)C_{15}(j)
$$
where we have left out the $\odot$ sign for brevity, and 
where the parameters $a,\ldots,j$ satisfy the inequalities
\begin{gather*}
a > c,~~  e > c,~~  f > d+g,~~  b+d > h,~~  h > g+i,~~  c+d+g+i > a+b,\\
b+i > d+g,~~  c+d+g > j,~~  i+j > b+c,~~  c+j > d+g,~~  g+j > d+e.
\end{gather*}
Similarly, consider the open, $10$-dimensional cone $Q$ consisting of all matrices
$$
C_{45}(a)C_{34}(b)C_{45}(c)C_{24}(d)C_{45}(e)
C_{15}(f)C_{12}(g)C_{24}(h)C_{23}(i)C_{13}(j)
$$
with inequalities
\begin{gather*}
a > c,~~  e > c,~~  c+d+g > f,~~  c+f > d+g,~~  f+g > c+d,\\
h > d,~~  b+d > i,~~  i > g+j,~~  f+j > a+b,~~  b+j > d+g.
\end{gather*}
In matrix form, these matrices are 
$$
\begin{bmatrix}
    0       & g       & i       & f       & j \\
    g       & 0       & g+i     & d       & d+e \\
    i       & h       & 0       & b       & b+c \\
    d+g     & d       & b       & 0       & c \\
    j       & c+d     & a+b     & c       & 0
\end{bmatrix}
\text{ and }
\begin{bmatrix}
    0       & g       & j       & g+h     & f \\
    g       & 0       & g+j     & d       & d+e \\
    j       & i       & 0       & b       & b+c \\
    d+g     & d       & b       & 0       & c \\
    f       & c+d     & a+b     & c       & 0
\end{bmatrix}.
$$
The linear spans of $P$ and $Q$ are the same, and the closures of $P$
and $Q$ cover all $10$-dimensional cones in $G_5$ with this span.
The latter matrix becomes the former after the substitution
$f \to j$,~ $h \to f-g$,~ $i \to h$,~ $j \to i$, and this substitution
turns its inequalities into
\begin{gather*}
a > c,~~  e > c,~~  c+d+g > j,~~  c+j > d+g,~~  g+j > c+d,\\
f > d+g,~~  b+d > h,~~  h > g+i,~~  i+j > a+b,~~  b+i > d+g.
\end{gather*}
We see that both cones satisfy
\begin{gather*}
a > c,~~  e > c,~~  f > d+g,~~  b+d > h,~~  h > g+i,~~ b+i > d+g,~~
c+j > d+g,\\
c+d+g > j,~~  i+j > b+c,~~ c+d+g+i > a+b,~~  g+j > c+d.
\end{gather*}
In addition, $P$ satisfies $g+j > d+e$ and $Q$ satisfies
$i+j > a+b$. It follows that $P$ and $Q$ have a 
10-dimensional intersection and their union is not a convex cone.
\hfill $\diamondsuit$
\end{ex}


\section{Tropicalising matrix groups} \label{sec:Proof}

In the previous sections we have established Theorem~\ref{thm:n234}
through explicit computations. We do not know of any systematic,
combinatorial description of a polyhedral structure on $G_n$ for larger
$n$. However, we will now establish that $G_n$, which is the support
set of {\em some} finite polyhedral fan by Lemma~\ref{lm:UpperBound},
has dimension $\binom{n}{2}$. Clearly, since $G_n$ contains $D_n$,
we have $\dim G_n \geq \dim D_n = \binom{n}{2}$.  So the difficulty of
Theorem~\ref{thm:main} is in proving that its dimension does not exceed
$\binom{n}{2}$.

For this, we make an excursion into tropical geometry.  Recall that if
$K$ is a field with a non-Archimedean valuation $v:K \to \Rb:=\RR \cup
\{\infty\}$ and if $I \subseteq K[x_1,\ldots,x_m]$ is an ideal, then
the tropical variety associated to $I$ is the set of all $w \in \Rb^n$
such that for each polynomial $f = \sum_\alpha c_\alpha x^\alpha \in I$
the minimum $\min_\alpha(v(c_\alpha)+w \cdot \alpha)$ is attained for at
least two distinct $\alpha \in \NN^n$. We denote this tropicalisation
by $\Trop(X)$, where $X$ is the scheme over $K$ defined by $I$. For
standard tropical notions we refer to \cite{Maclagan15}. If $(L,v)$
is any valued extension of $K$, then the coordinate-wise valuation map
$v:L^n \to \Rb^n$ maps $X(L)$ into $\Trop(X)$. If, moreover, $v:L \to \Rb$
is non-trivial and $L$ is algebraically closed, then the image of the
map $X(L) \to \Trop(X)$ is dense in $\Trop(X)$ in the Euclidean topology.
Together with the Bieri-Groves theorem~\cite{Bieri84}, this implies
that the set $\Trop(X)$ is (the closure in $\Rb^n$ of) a polyhedral complex of
dimension equal to $\dim X$.

We now specialise to matrix groups. As a warm-up, consider the
special linear group $\lieg{SL}_n$, defined over $\QQ$ by the
single polynomial $\det(x)-1$ where $x$ is an $n \times n$-matrix
of indeterminates. Since there is only one defining polynomial and all
its coefficients 
are $\pm 1$, the valuation does not matter and $\Trop(\lieg{SL}_n)$
equals the set of all $A=(a_{ij})_{ij} \in \Rb^{n \times n}$ for which
the {\em tropical determinant} \[ \tdet(A):=\min_{\pi \in \Sym(n)}
(a_{1\pi(1)}+\cdots+a_{n\pi(n)}) \] is either zero, or else negative
and attained at least twice.

\begin{prop}
The tropicalisation $\Trop(\lieg{SL}_n)$ is a monoid under tropical
matrix multiplication.
\end{prop}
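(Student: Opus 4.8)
The plan is to show that $\Trop(\lieg{SL}_n)$ is closed under the operation $\odot$ and contains the tropical identity $I$ (which has $\tdet(I) = 0$, so lies in the set). The key is a tropical analogue of the multiplicativity of the determinant: I would first establish that for any $A, B \in \Rb^{n \times n}$,
\[
\tdet(A \odot B) \geq \tdet(A) + \tdet(B),
\]
with equality whenever both $\tdet(A)$ and $\tdet(B)$ are attained uniquely. This follows from the permanent-style expansion: an entry $(A \odot B)_{ij} = \min_k (a_{ik} + b_{kj})$, so $\tdet(A \odot B) = \min_{\pi} \sum_i (A \odot B)_{i\pi(i)} = \min_{\pi, f} \sum_i (a_{i f(i)} + b_{f(i) \pi(i)})$ over permutations $\pi$ and functions $f \colon [n] \to [n]$. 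The inequality comes from noting that the optimal inner choice need not make $f$ a permutation, while restricting to permutations $f$ (and reindexing) gives exactly $\tdet(A) + \tdet(B)$ as an upper bound attained in the permutation case.

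Next I would use this to verify the two conditions defining $\Trop(\lieg{SL}_n)$ for the product $A \odot B$ of two elements $A, B$ of the tropicalisation. The two cases are: (i) if either factor has $\tdet$ equal to $0$ (hence the optimal assignment is the identity weighting, achieved in a degenerate way), and (ii) if both have negative $\tdet$ attained at least twice. The real content is the \emph{tropical Cramer / Cauchy–Binet} phenomenon: because the minimising permutation in $\tdet(A)$ and in $\tdet(B)$ is attained at least twice (or the value is zero), the composed minimum over $(\pi, f)$ is likewise attained at least twice, unless it equals $0$. I would make this precise by a careful bookkeeping argument: a minimising pair $(\pi, f)$ for $A \odot B$ decomposes into a minimiser for $A$ (via $f$) and for $B$ (via $\pi \circ f^{-1}$ when $f$ is a permutation), and the multiplicity hypotheses on each factor propagate to a multiplicity for the product.

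The cleanest route, which I expect to be the one that actually works, is to \emph{avoid reproving tropical Cauchy–Binet from scratch} and instead invoke the geometric characterisation recalled just above the proposition: $\Trop(\lieg{SL}_n)$ is the image-closure of $\lieg{SL}_n(L)$ under the valuation map $v$, for $L$ an algebraically closed nontrivially valued extension. Since $v(MN) = v(M) \odot v(N)$ holds whenever no cancellation occurs in the matrix product (which is the generic situation, and suffices on a dense subset), the monoid structure on $\lieg{SL}_n(L)$ pushes forward: the set $v(\lieg{SL}_n(L))$ is closed under $\odot$ on a dense subset, and $\odot$ is continuous, so the closure $\Trop(\lieg{SL}_n)$ is a monoid. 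The main obstacle is precisely the discrepancy between $v(MN)$ and $v(M) \odot v(N)$ caused by cancellation of lowest-order terms; I would handle this by perturbing entries within a fibre of $v$ to kill the cancellation, using that $L$ is algebraically closed and the valuation nontrivial, so generic representatives of given valuation data exist and their products realise the tropical product. This density-plus-continuity argument sidesteps the combinatorial multiplicity analysis and is the step I would develop most carefully.
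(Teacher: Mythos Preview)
Your displayed inequality is reversed: the tropical determinant is \emph{sub}multiplicative, $\tdet(A\odot B)\le \tdet(A)+\tdet(B)$, exactly because allowing arbitrary functions $f$ can only lower the minimum. Your own sentence (``\ldots gives $\tdet(A)+\tdet(B)$ as an upper bound'') says this, so the display is a slip; but get it right, since the whole argument rests on $\tdet(C)\le 0$.

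More importantly, your preferred route---the density/genericity argument---has a genuine gap. You want to lift $A,B$ to $M,N\in\lieg{SL}_n(L)$ and perturb within the fibres of $v$ so that no cancellation occurs in $MN$. But the defining equations of the group constrain the \emph{leading coefficients} of the entries of $M$ and $N$, and those constraints can \emph{force} cancellation in every product. The paper's very next example makes this vivid: for the one-parameter unipotent group embedded in $\lieg{GL}_4$ via $x\mapsto\begin{psmallmatrix}1&x&-x&0\\0&1&0&x\\0&0&1&x\\0&0&0&1\end{psmallmatrix}$, the $(1,4)$ entry of any product $g(x)g(y)$ is identically $xy-xy=0$, so $v(MN)\neq v(M)\odot v(N)$ for \emph{every} choice of lifts, and $\Trop(G)$ is not closed under $\odot$. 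Your density scheme would apply verbatim to this $G$ and is therefore false as stated. To rescue it for $\lieg{SL}_n$ you would have to prove that the single equation $\det=1$ never forces such cancellation---and checking that is no easier than the direct combinatorial argument.

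The paper instead carries out your first sketch directly, and the one step you skate over is the crux. Given a minimiser $\pi$ for $\tdet(C)$ with $\tdet(C)<0$, choose for each $i$ an index $\sigma(i)$ with $c_{i\pi(i)}=a_{i\sigma(i)}+b_{\sigma(i)\pi(i)}$. If $\sigma$ is \emph{not} a permutation, say $\sigma(i)=\sigma(j)$, then $\pi\circ(i\,j)$ is a second minimiser and you are done; your write-up only treats the case ``when $f$ is a permutation'' and never says what happens otherwise. If $\sigma$ \emph{is} a permutation, write $\pi=\tau\circ\sigma$; then $\tdet(C)=\sum a_{i\sigma(i)}+\sum b_{j\tau(j)}<0$ forces one of $\tdet(A),\tdet(B)$ to be negative, and the attained-twice hypothesis on that factor produces a second $\sigma'$ (or $\tau'$), hence a second minimiser $\pi'$. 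That is the whole proof; your case split ``(i) one factor has $\tdet=0$ / (ii) both negative'' is not the right dichotomy.
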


\begin{proof}
For $A,B \in \Trop(\lieg{SL}_n)$ set $C:=A \odot B$. A straightforward
computation shows that the tropical determinant is (tropically)
submultiplicative, so that $\tdet(C) \leq \tdet(A) + \tdet(B) \leq 0 +
0 = 0$. Hence it suffices to show that if $\tdet(C)<0$, then there are
at least two permutations realising the minimum in the definition of
$\tdet(C)$. Let $\pi \in \Sym(n)$ be one minimiser of the expression
$c_{1\pi(1)}+\cdots+c_{n\pi(n)}$. For each $i \in [n]$ let $\sigma(i)\in
[n]$ be such that
$c_{i\pi(i)}=a_{i\sigma(i)}+b_{\sigma(i)\pi(i)}$. Now
there are two cases: either $\sigma$ is a permutation, or there exist
$i,j$ with $\sigma(i)=\sigma(j)$. In the latter case, also the permutation
$\pi \circ (i,j) \neq \pi$ is a minimiser, and we are done. In the former case,
write $\pi=\tau \circ \sigma$. Then we have have
\[ 0> \tdet(C)=c_{1\pi(1)}+\cdots+c_{n\pi(n)}=
(a_{1\sigma(1)}+\cdots+a_{n\sigma(n)})+
(b_{1\tau(1)}+\cdots+b_{n\tau(n)}), \]
so that at least one of $\tdet(A)$ and $\tdet(B)$ is negative. If
$\tdet(A)<0$, then since $A \in \Trop(\lieg{SL}_n)$, there exists a permutation
$\sigma' \neq \sigma$ such that
\[ a_{1\sigma'(1)}+\cdots+a_{n\sigma'(n)} \leq a_{1
\sigma(n)}+\cdots+a_{n \sigma(n)}, \]
and we find that $\pi':=\tau \circ \sigma' \neq \pi$ is another
minimiser. The argument for $\tdet(B)<0$ is similar.
\end{proof}

In general, it is not true that the tropicalisation of a matrix group
(relative to the standard coordinates) is a monoid under tropical
multiplication.

\begin{ex}
Let $G$ denote the group of $4 \times 4$-matrices of the form
\[ 
\begin{bmatrix}
1 & x & -x & 0 \\
0 & 1 & 0  & x \\
0 & 0 & 1  & x \\
0 & 0 & 0  & 1
\end{bmatrix}
\]
where $x$ runs through the field $K$. This is a one-dimensional algebraic
group isomorphic to the additive group, whose
tropicalisation consists of all matrices
\[ 
\begin{bmatrix}
0 & a & a & \infty \\
\infty & 0 & \infty  & a \\
\infty & \infty & 0  & a \\
\infty & \infty & \infty  & 0
\end{bmatrix}
\]
where $a \in \Rb$. But we have 
\[
\begin{bmatrix}
0 & a & a & \infty \\
\infty & 0 & \infty  & a \\
\infty & \infty & 0  & a \\
\infty & \infty & \infty  & 0
\end{bmatrix}
\odot 
\begin{bmatrix}
0 & b & b & \infty \\
\infty & 0 & \infty  & b \\
\infty & \infty & 0  & b \\
\infty & \infty & \infty  & 0
\end{bmatrix}
=
\begin{bmatrix}
0 & \!\min\{a,b\}  & \!\min\{a,b\} & a+b \\
\infty & 0 & \infty  & \!\min\{a,b\} \\
\infty & \infty & 0  & \!\min\{a,b\} \\
\infty & \infty & \infty  & 0
\end{bmatrix}\!, 
\]
which for $a,b<\infty$ does not lie in $\Trop(G)$. \hfill
$\diamondsuit$
\end{ex}

Now consider the orthogonal group $\lieg{O}_n$ consisting of all matrices
$g$ that satisfy $g^\tp g=I$. We do not know whether $\Trop(\lieg{O}_n)$
is a monoid under tropical matrix multiplication, but we shall see that
this tropicalisation does contain the lossy gossip monoid.  For this, we
take $L$ to be the field $\CC\{\{t\}\}$ of Puiseux series in a variable
$t$, and $v$ to be the order of a Puiseux series at $0$.
Motivated by the analogy between the lossy
phone call matrices $C_{ij}(a), a \in \RR_{\geq 0}$ and one-parameter
subgroups of algebraic groups (see Section~\ref{sec:Prel}), we introduce
the one-parameter subgroups $g_{ij}(x)$ of $\lieg{O}_n$ by
\[ 
g_{ij}(x):=
\begin{bmatrix} 
	1 &         &        &        &\\
	  & \cos(x) & \cdots &-\sin(x)&\\
	  & \vdots  & 1       & \vdots&        &\\
	  & \sin(x) & \cdots & \cos(x) &\\
	  &         &        &        & 1 
\end{bmatrix}, 
\]
where the $1$s stand for identity matrices, the cosines and sines are
in the $\{i,j\} \times \{i,j\}$-submatrix, and the empty entries are
$0$. For any choice of $x$ in the field $L$ whose order $\val(x)$ at
zero is positive, the matrix $g_{ij}(x)$ is a well-defined matrix in
the orthogonal group $\lieg{O}_n(L)$.

\begin{prop} \label{prop:GnTrop}
The lossy gossip monoid $G_n$ is contained in $\Trop(\lieg{O}_n)$.
\end{prop}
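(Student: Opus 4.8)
The plan is to realise every element of $G_n$ as the coordinatewise valuation of an explicit matrix in $\lieg{O}_n(L)$, where $L=\CC\{\{t\}\}$. Recall that the valuation map sends $\lieg{O}_n(L)$ into $\Trop(\lieg{O}_n)$, so it suffices to produce such lifts. The starting observation is that the one-parameter subgroups tropicalise precisely to the generators of $G_n$: if $x\in L$ has $v(x)=a\in(0,\infty)$, then $v(\cos x)=0$ (leading coefficient $1$), $v(\sin x)=a$, and the genuinely-zero off-diagonal entries of $g_{ij}(x)$ have valuation $\infty$, whence $v(g_{ij}(x))=C_{ij}(a)$. By Lemma~\ref{lm:Generation} every element of $G_n$ is a tropical product of such $C_{ij}(a)$, so I would try to lift an entire product at once.

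First I would fix a product $P:=C_{I_1}(a_1)\odot\cdots\odot C_{I_k}(a_k)$ with all $a_r\in(0,\infty)$, choose $x_r\in L$ with $v(x_r)=a_r$ and leading coefficient $c_r\in\CC^\times$, and set $g:=g_{I_1}(x_1)\cdots g_{I_k}(x_k)\in\lieg{O}_n(L)$. Because the valuation satisfies $v(AB)_{pq}\ge(v(A)\odot v(B))_{pq}$ entrywise, with equality unless the lowest-order terms cancel, iterating over the $k$ factors gives $v(g)_{pq}\ge P_{pq}$ for all $p,q$. The whole point is to upgrade this to an equality for a \emph{generic} choice of the $c_r$, so that $v(g)=P\in\Trop(\lieg{O}_n)$.

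The hard part will be ruling out cancellation of lowest-order terms, and this is the heart of the argument. Expand the $(p,q)$-entry of $g$ as a sum over index sequences $p=s_0,s_1,\dots,s_k=q$ of the products $\prod_r\bigl(g_{I_r}(x_r)\bigr)_{s_{r-1}s_r}$. At each factor an index sequence either \emph{stays} (a diagonal entry, leading coefficient $1$, valuation $0$) or \emph{crosses} between the two indices of $I_r$ (an entry $\pm\sin x_r$, leading coefficient $\pm c_r$, valuation $a_r$); every other entry is zero. Hence the only contributions of $t$-adic order $P_{pq}$ come from the tropically minimal paths, and the coefficient of $t^{P_{pq}}$ equals $\sum_S\varepsilon_S\prod_{r\in S}c_r$, the sum running over the sets $S$ of factors at which such a minimal path crosses. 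The key combinatorial observation is that, reading the factors in order, the set $S$ determines the path completely: at each $r\in S$ the current position is forced to be one of the two indices of $I_r$, and the crossing — together with its sign $\varepsilon_S$ — is then determined. Consequently distinct sets $S$ give distinct squarefree monomials $\prod_{r\in S}c_r$, no two contributions can cancel, and the coefficient is a nonzero polynomial in $c_1,\dots,c_k$ whenever $P_{pq}<\infty$ (when $P_{pq}=\infty$ the sum is empty and the entry is genuinely $0$, matching $P$). Choosing $c_1,\dots,c_k$ generically, say algebraically independent over $\QQ$, makes all of these finitely many coefficients nonzero simultaneously, giving $v(g)=P$.

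Finally I would pass to the boundary values $a_r\in\{0,\infty\}$: a factor $C_{ij}(\infty)$ equals the tropical identity $v(I)$ and may be dropped, while any product involving factors $C_{ij}(0)$ is the limit, as the corresponding $a_r\downarrow 0$, of products already shown to lie in $\Trop(\lieg{O}_n)$. Since tropical matrix multiplication is continuous on $[0,\infty]^{n\times n}$ and $\Trop(\lieg{O}_n)$ is closed in $\Rb^{n\times n}$ (by Bieri--Groves), these limits remain in $\Trop(\lieg{O}_n)$; combined with Lemma~\ref{lm:Generation} this yields $G_n\subseteq\Trop(\lieg{O}_n)$. I expect the no-cancellation step of the third paragraph to be the only genuine obstacle, with everything else amounting to bookkeeping.
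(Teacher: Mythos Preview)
Your approach is the same as the paper's: lift each generator $C_{ij}(a)$ to the one-parameter subgroup $g_{ij}(x)$, argue that for generic leading coefficients $c_r$ no cancellation occurs in the product, and then pass to the closure. In fact your third paragraph supplies a justification the paper omits --- the paper simply asserts that the no-cancellation locus is the complement of a \emph{proper} hypersurface, whereas you explain why the lowest-order coefficient is a nonzero polynomial (distinct crossing sets $S$ yield distinct squarefree monomials in the $c_r$). One small repair: since $L=\CC\{\{t\}\}$ has value group $\QQ$, you can only choose $x_r$ with $v(x_r)=a_r$ when $a_r\in\QQ_{>0}$, so your closure argument in the last paragraph must also absorb irrational $a_r$, not just the boundary values $0$ and $\infty$ (the paper does exactly this by restricting to rational $a_r$ first and then invoking closedness of $\Trop(\lieg{O}_n)$).
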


\begin{proof}
First note that $\val(g_{ij}(x))=C_{ij}(\val(x))$, so the statement
would be immediate if we knew that $\Trop(\lieg{O}_n)$ 
were closed under tropical matrix multiplication. We prove something
weaker. Let $a_1,\ldots,a_k$ be strictly positive rational numbers and
let $(i_1,j_1),\ldots,(i_k,j_k)$ be pairs of distinct
indices. Then for a vector $(c_1,\ldots,c_k) \in \CC^k$
outside some proper hypersurface, no cancellation takes
place in the expression 
\[ g_{i_1,j_1}(c_1 t^{a_1}) \cdots g_{i_k,j_k}(c_k t^{a_k}),
\]
in the sense that 
\[ \val[g_{i_1,j_1}(c_1 t^{a_1}) \cdots g_{i_k,j_k}(c_k
t^{a_k})]
= \val[g_{i_1,j_1}(c_1 t^{a_1})] \odot \cdots \odot
\val[g_{i_k,j_k}(c_k t^{a_k})]. \] 
Here the right-hand side equals $C_{i_1,j_1}(a_1) \odot
\cdots \odot C_{i_k,j_k}(a_k)$, and lies in
$\Trop(O_n)$ since the left-hand side does. Since $\Trop(O_n)$ is closed in the
Euclidean topology, all of $G_n$ is contained in it.
\end{proof}

The dimension claim in Theorem~\ref{thm:main} follows from
Proposition~\ref{prop:GnTrop}, the Bieri-Groves theorem, and the fact
that $\dim \lieg{O}_n=\binom{n}{2}$.

For $n=1,2,3$, we can say a little bit more about
$\Trop(\lieg{O}_n)$. 

\begin{ex}
For $n=1$, $\Trop(\lieg{O}_n)$ consists of the single $1 \times 1$-matrix $0$.
Next, for a $2 \times 2$-matrix 
\[ g=\begin{bmatrix} x & y \\ u & v \end{bmatrix} \text{
with valuation } \begin{bmatrix} a & b \\ c & d \end{bmatrix} \]
to lie in $\lieg{O}_2$ we need that $x^2 + u^2-1=y^2+v^2-1=0=xy+uv=0$, and these equations generate the ideal
of $\lieg{O}_2$.  Tropicalising these equations yields that
$\min\{a,c,0\},\min\{b,d,0\},\min\{a+b,c+d\}$ are all attained at least
twice. This is not sufficient to characterise $\Trop(\lieg{O}_2)$;
indeed, for any negative $a,b$ the matrix
\[ \begin{bmatrix} a & b \\ a & b \end{bmatrix} \]
satisfies all tropical equations above, but (unless $a=b$) not
the tropicalisation of the equation $xv-yu=1$ which expresses that
$\lieg{O}_2 \subseteq \lieg{SL}_2$. Imposing this additional condition,
i.e., that $\min\{a+d,b+c,0\}$ is attained at least twice, we find that
$\Trop(\lieg{O}_2)$ consists of three cones:
\begin{align*} \Trop(\lieg{O}_2)&= 
\left\{\begin{bmatrix} 0 & a \\ a & 0 \end{bmatrix} \mid a \in [0,\infty]\right\}
\cup 
\left\{\begin{bmatrix} a & 0 \\ 0 & a \end{bmatrix} \mid a
\in [0,\infty]\right\}\\
&\cup 
\left\{\begin{bmatrix} a & a \\ a & a \end{bmatrix} \mid a
\in (-\infty,0]\right\}.
\end{align*} 
The first cone is $G_2$, the second cone is $G_2$ with the columns
reversed, and the third cone makes the fan balanced. 

In general, if a variety is stable under a coordinate permutation,
then its tropicalisation is stable under the same coordinate
permutation. Consequently, $\Trop(O_n)$ is stable under permuting rows,
under permuting columns, and under matrix transposition.

For $n=3$, a computation using \verb+gfan+ \cite{gfan} shows that
the quadratic equations expressing that columns and rows both form
orthonormal bases, together with the equation $\det-1$, do {\em not}
form a tropical basis. For example, the four-dimensional cone of
matrices 
\[ 
\begin{bmatrix}
a & a & b \\
a & a & b \\
c & c & d 
\end{bmatrix}
\]
with $a \leq b \leq c \leq 0 \leq d$ is contained in the 
tropical prevariety defined by the corresponding tropical
equations, and for dimension reasons cannot belong to the
three-dimensional fan $\Trop(\lieg{O}_3)$.  

However, these quadratic equations {\em do} suffice to prove that
$\Trop(\lieg{O}_3) \cap [0,\infty]^{3 \times 3}$ is equal to $\Sym(3)
\cdot G_3$, i.e., obtained from $G_3$ by permuting rows. Indeed, let
a $3 \times 3$-matrix $A$ in $[0,\infty]^{3 \times 3}$ satisfy the
tropicalisations of these equations. Then $\tdet(A)=0$, hence after
permuting rows $A$ has zeroes on the diagonal. Now we distinguish two
cases. First, assume that $A$ is symmetric:
\[ A=\begin{bmatrix}
                0 & a & b \\
                a & 0 & c \\
                b & c & 0
\end{bmatrix}. \]
Then we claim that $A$ lies in $D_3$. Indeed, suppose that $a > b+c$.
Then the tropicalisation of the condition that the first two columns are
perpendicular does not hold for $A$. Hence $a \leq b+c$ and similarly
for the other triangle inequalities; we conclude that $A \in D_3$. Next, assume that $A$ is not
symmetric. After conjugation
with a permutation matrix, we may assume
that $A$ is of the form
\[ \begin{bmatrix}
                0 & a & b \\
                d & 0 & c \\
                e & f & 0
\end{bmatrix} \]
with $a>d$. Then the tropical perpendicularity of the first two
columns yields $d=e+f$, that of the last two columns yields
$c=f$, and that of the first two rows yields $d=b+c$. So $A$ looks like
\[ \begin{bmatrix}
                0 & a & b \\
                b+c & 0 & c \\
                b & c & 0
\end{bmatrix}, \]
which is one of the cones in $G_3$. 
\hfill $\diamondsuit$
\end{ex} 

\begin{re}
We do not know whether the equality $\Sym(n) \cdot G_n=\Trop(\lieg{O}_n)
\cap [0,\infty]^{n \times n}$ (where the action of $\Sym(n)$ is by left
multiplication) holds for all $n$. If true, then this would
be interesting from the perspective of algebraic groups over
non-Archimedean fields: it would say that the image under $v$ of the
compact subgroup $\lieg{O}_n(L^0) \subseteq \lieg{O}_n(L)$, where
$L^0$ is the valuation ring of $L$, is (dense in) the lossy gossip
monoid.  But we see no reason to believe that this is true in general.
A computational hurdle to checking this even for $n=4$ is the computation
of a polyhedral fan supporting $\Trop(\lieg{O}_n)$. For $n=3$ this can
still be done using \verb+gfan+, and it results in a fan with $f$-vector
$(580,1698,1143)$. Among the $1143$ three-dimensional cones, $1008$ are
contained in the positive orthant, as opposed to the $6 \cdot 7=42$ found
by applying row permutations to the cones in $G_3$. This suggests that
\verb+gfan+ does not automatically find the most efficient fan structure
on $\lieg{O}_n$, and at present we do not know how to overcome this.
\end{re}

\section{Ordinary gossip} \label{sec:Ordinarygossip}

In this section we study the {\em ordinary gossip monoid}
$G_n(\{0,\infty\})$, which is the submonoid of $G_n$ of matrices with
entries in $\{0,\infty\}$. Note that there is a surjective homorphism $G_n
\to G_n(\{0,\infty\})$ mapping non-$\infty$ entries to $0$ and $\infty$ to
$\infty$, which shows that the length of an element of $G_n(\{0,\infty\})$
inside $G_n$ is the same as the minimal number of non-lossy phone
calls $C_{ij}(0)$ needed to express it. A classical result says that
length of the all-zero matrix is exactly $1$ for $n=2$, $3$ for $n=3$,
and $2n-4$ for $n \geq 4$ \cite{Baker72,Bumby81,Hajnal72,Tijdeman71},
and this result spurred a lot of further activity on gossip networks.
But the all-zero matrix does not necessarily have the largest possible
length---see Table~\ref{table:snbb}, which records sizes and maximal
element lengths for $G_n(\{0,\infty\})$ with $n \leq 9$. The first $8$
rows were computed by former Eindhoven Master's student Jochem Berndsen
\cite{Berndsen12}. 

\begin{table}[ht]
\begin{center}
\begin{tabular}{c|r|r}
$n$ & $|G_n(\{0,\infty\})|$ & max. length \\
\hline
1 & 1 & 0 \\
2 & 2 & 1 \\
3 & 11 & 3 \\
4 & 189 & 4 \\
5 & 9152 & 6 \\
6 & 1,092,473 & 10 \\
7 & 293,656,554 & 13 \\
8 & 166,244,338,221 & 16\\
9 & 188,620,758,836,916 & 19\\
\hline
\end{tabular}
\end{center}
\caption{\label{table:snbb}Sizes and maximal lengths of
$G_n(\{0,\infty\})$, for $n = 1, \ldots, 9$.}
\end{table}

While we do not know the maximal length of an element in
$G_n(\{0,\infty\})$ for general $n$, we do have an upper bound, namely,
the maximal number of factors in an irredundant product. This number,
in turn, is bounded from above by $\binom{n}{2}$, as we now prove.

\begin{proof}[Proof of Theorem~\ref{thm:pessimal} and 
Corollary~\ref{cor:irredundant}.]
Consider $n$ gossipers, initially each with a different gossip item
unknown to all other gossipers. They communicate by telephone, and
whenever two gossipers talk, each tells the other all he knows. We will
determine the maximal length of a sequence of calls, when in each call
at least one participant learns something new. The answer turns out to
be $\binom{n}{2}$.

That $\binom{n}{2}$ is a lower bound, is shown by the following scenario:
Number the gossipers $1,\ldots,n$. All calls involve gossiper $1$.
For $i=2,\ldots,n$ he calls $i,i-1,\ldots,2$, for a total of
$1+2+\cdots+(n-1) = \binom{n}{2}$ calls. There are many other scenarios
attaining $\binom{n}{2}$, and it does not seem easy to classify them.

We now argue that $\binom{n}{2}$ is an upper bound. Although we will
not use this, we remark that it is easy to see that $2 \cdot
\binom{n}{2}=n(n-1)$ is an upper bound. After all, each of
the $n$ participants must learn $n-1$ items, and in each call at least
one participant learns something.

Let $I_1,I_2,\ldots,I_\ell$ be a sequence of unordered pairs from $[n]$
representing phone calls where in each call at least one participant
learns something new. To each $I_a$ we associate the homomorphism
$\phi_a:=\lieg{SO}_2(\CC) \to \lieg{SO}_n(\CC)$ that maps a $2 \times
2$ matrix $g$ to the matrix that has $g$ in the $I_a \times I_a$-block
and otherwise has zeroes outside the diagonal and ones on the diagonal.
For each $k \leq \ell$ we obtain a morphism of varieties (not a group
homomorphism) $\psi_k:\lieg{SO}_2(\CC)^k \to \lieg{SO}_n(\CC)$ sending
$(g_1,\ldots,g_k)$ to $\phi_1(g_1) \cdots \phi_k(g_k)$. Let $X_k$ be the
closure of the image of $\psi_k$; this is an irreducible subvariety of
$\lieg{SO}_n(\CC)$. The $(i,j)$-matrix entry is identically zero on $X_k$
if and only if gossiper $j$ does not know gossip $i$ after the first $k$
phone calls. Since some gossiper learns something new in the $k$-th
phone call, some matrix
entry is identically zero on $X_{k-1}$ which is not identically zero
on $X_k$. Consequently, we have $0=\dim X_0<\dim X_1<\ldots<\dim
X_\ell$. But all $X_k$ are contained in the variety
$\lieg{SO}_n(\CC)$ of dimension $\binom{n}{2}$, so we conclude that
$\ell \leq \binom{n}{2}$. 

This concludes the proof of
Theorem~\ref{thm:pessimal}. Corollary~\ref{cor:irredundant} follows
because in any irredundant product of phone calls, every initial segment
must be a sequence of phone calls in each of which at least one party
learns something new.
\end{proof}

We computed the longest irredundant products of phone calls
for small $n$, see Table~\ref{table:irred}.

\begin{table}[ht]
\begin{center}
\begin{tabular}{c|ccccccccc}
$n$ & 1 & 2 & 3 & 4 & 5 & 6 & 7 & 8 \\
\hline
$l_n$ & 0 & 1 & 3 & 5 & 8 & 12 & 16 & $\geq 21$
\end{tabular}
\medskip
\end{center}
\caption{\label{table:irred}
Maximum length $l_n$ of an irredundant product of phone calls.}
\end{table}

\section{Open questions} \label{sec:Open}

In view of the extensive computations in
Sections~\ref{sec:Three}--\ref{sec:Five}
and the rather indirect dimension argument in
Section~\ref{sec:Proof}, the most urgent challenge concerning the lossy
gossip monoid is the following.

\begin{que}
Find a purely combinatorial description of a polyhedral fan structure
with support $G_n$. Use this description to prove or disprove the pureness
of dimension $\binom{n}{2}$ and the connectedness in codimension one.
\end{que}

The following question is motivated on the one hand by the fact
that $G_n$ has dimension $\binom{n}{2}$ and on the other hand by
Theorem~\ref{thm:pessimal}, which implies that elements of the ordinary
gossip monoid $G_n(\{0,\infty\})$ have length at most $\binom{n}{2}$.

\begin{que}
Is the length of any element of $G_n$ at most
$\binom{n}{2}$?
\end{que}

Once a satisfactory polyhedral fan for $G_n$ is found, the
somewhat ad-hoc graphs in Sections~\ref{sec:Three} and~\ref{sec:Four}
lead to the following challenge.

\begin{que}
Find a useful notion of optimal realisations of elements of
$G_n$ by graphs with detours, and a notion of {\em tight
spans} of such elements. 
\end{que}

For the relation between tight spans and optimal realisations of metrics
by weighted graphs see \cite[Theorem 5]{Dress84}.

We conclude with two question concerning tropicalisations of orthogonal
groups (Section~\ref{sec:Proof}).

\begin{que}
Is $\Trop(\lieg{O}_n)$ a monoid under tropical matrix multiplication? This
is evident for $n \leq 2$, we have checked it computationally for $n=3$,
and it is open for $n \geq 4$.
\end{que}

\begin{que}
Is it true that $\Trop(\lieg{O}_n) \cap [0,\infty]^{n \times n}$ equals
$\Sym(n) \cdot G_n$? Here the action of $\Sym(n)$ is by permuting
rows. This is true for $n \leq 3$, and open for $n \geq 4$. 
\end{que}


\end{document}